\title[Trace Paley-Wiener theorem for asymptotic Hecke algebra]{Trace Paley-Wiener theorem for Braverman-Kazhdan's asymptotic Hecke algebra}
\author{Kenta Suzuki}
\numberwithin{equation}{section}
\newtheorem{thm}{Theorem}[section]
\newtheorem{lemma}[thm]{Lemma}
\newtheorem{cor}[thm]{Corollary}
\newtheorem{prop}[thm]{Proposition}
\theoremstyle{definition}
\newtheorem{rmk}{Remark}
\newtheorem{defn}{Definition}
\newtheorem{example}[thm]{Example}
\newcommand{\C}{{\mathbb C}}
\newcommand{\fs}{{\mathfrak s}}
\newcommand{\fd}{{\mathfrak d}}
\newcommand{\cO}{{\mathcal O}}
\newcommand{\cP}{{\mathcal P}}
\newcommand{\bG}{{\mathbf G}}
\newcommand{\bM}{{\mathbf M}}
\newcommand{\bP}{{\mathbf P}}
\newcommand{\cH}{{\mathcal H}}
\newcommand{\cC}{{\mathcal C}}
\newcommand{\cJ}{{\mathcal J}}
\newcommand{\GL}{{\mathrm{GL}}}
\newcommand{\sm}{{\mathrm{sm}}}
\DeclareMathOperator{\Cent}{Z}
\DeclareMathOperator{\triv}{triv}
\DeclareMathOperator{\End}{End}
\DeclareMathOperator{\Spec}{Spec}
\DeclareMathOperator{\Vect}{Vect}
\DeclareMathOperator{\Rep}{Rep}
\DeclareMathOperator{\Irr}{Irr}
\DeclareMathOperator{\Forg}{Forg}
\DeclareMathOperator{\Hom}{Hom}
\DeclareMathOperator{\Frac}{Frac}
\DeclareMathOperator{\reg}{reg}
\begin{document}

\begin{abstract}
    Let $\mathbf G$ be a reductive algebraic group over a non-archimedean local field $F$ of characteristic zero and let $G=\mathbf G(F)$ be the group of $F$-rational points. Let $\mathcal H(G)$ be the Hecke algebra and let $\mathcal J(G)$ be the asymptotic Hecke algebra, as defined by Braverman and Kazhdan. We classify irreducible representations of $\mathcal J(G)$. As a consequence, we prove a conjecture of Bezrukavnikov-Braverman-Kazhdan that the inclusion $\mathcal H(G)\subset\mathcal J(G)$ induces an isomorphism $\mathcal H(G)/[\mathcal H(G),\mathcal H(G)]\simeq\mathcal J(G)/[\mathcal J(G),\mathcal J(G)]$ on the cocenters. We also provide an explicit description of $\mathcal J(G)$ and the cocenter $\mathcal H(G)/[\mathcal H(G),\mathcal H(G)]$ when $\mathbf G=\mathrm{GL}_n$.
\end{abstract}

\maketitle

\section{Introduction}

Let $\mathbf G$ be a reductive algebraic group over a non-archimedean local field $F$ of characteristic zero and let $G=\mathbf G(F)$ be the group of $F$-rational points (throughout the paper, bold-faced letters such as $\mathbf G$, $\mathbf P$, and $\mathbf M$ will denote algebraic groups over $F$ and normal letters such as $G$, $P$, and $M$ will denote their $F$-rational points). Let $\cH(G)$ be the \emph{Hecke algebra}, the space of locally constant measures on $G$ with compact support.

For an Iwahori subgroup $I\subset G$ the subalgebra $\cH(G,I)\subset \cH(G)$ of $I\times I$-invariant measures is an affine Hecke algebra with parameter $q_F$, the size of the residue field of $F$. In \cite{cells2} Lusztig defines the \emph{asymptotic Hecke algebra} $J$, which is the ``limit" of $\cH(G,I)$ as $q_F\to\infty$. Although $J$ is defined using the canonical basis, \cite{braverman-kazhdan,geometric-realization} prove a spectral description of $J$. Braverman and Kahzdan in turn use the spectral definition to extend $J$ and define the full asymptotic Hecke algebra $\cJ(G)$ which recovers Lusztig's algebra upon taking $I$-invariants:
\[
\cJ(G,I)\colonequals\cJ(G)^{I\times I}\simeq J.
\]
In \S\ref{sec:gln} we provide an explicit description of $\mathcal J(G)$ when $\mathbf G=\GL_n$ using the explicit description of the Hecke algebra in \cite{bushnell-kutzko}.

In \cite{bezrukavnikov-braverman-kazhdan} Bezrukavnikov, Braverman, and Kazhdan proves that the inclusion $\cH(G,I)\subset \cJ(G,I)$ induces an isomorphism on the cocenters\[\cH(G,I)/[\cH(G,I),\cH(G,I)]\simeq \cJ(G,I)/[\cJ(G,I),\cJ(G,I)]\] and conjectures the isomorphism extends to the full Hecke algebra. We prove this conjecture:
\begin{thm}[{\cite[Conjecture~1]{bezrukavnikov-braverman-kazhdan}}]\label{main-thm}
    Let $\cH(G)$ be the Hecke algebra, and let $\cJ(G)$ be Braverman and Kahzdan's asymptotic Hecke algebra. The inclusion $\cH(G)\subset \cJ(G)$ induces an isomorphism
    \begin{equation}\label{eq:main-theorem-intro}
    \cH(G)/[\cH(G),\cH(G)]\simeq\cJ(G)/[\cJ(G),\cJ(G)].
    \end{equation}
\end{thm}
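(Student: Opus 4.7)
The plan is to establish an analogue of the Bernstein--Deligne--Kazhdan trace Paley--Wiener theorem for $\cJ(G)$ and then deduce the cocenter comparison from functoriality of the trace map under the inclusion $\cH(G)\hookrightarrow\cJ(G)$. Recall that the classical BDK theorem identifies $\cH(G)/[\cH(G),\cH(G)]$ with a space of ``Paley--Wiener'' functions on $\Irr^{\mathrm{temp}}(G)$: functions that are finitely supported modulo unramified twists on each Bernstein component and that depend regularly on the twist parameter. I aim to prove that the same description holds for $\cJ(G)/[\cJ(G),\cJ(G)]$, once $\Irr(\cJ(G))$ is correctly identified.

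Step 1 is to classify $\Irr(\cJ(G))$. Working Bernstein-block-by-Bernstein-block and using Braverman--Kazhdan's spectral realization, I expect $\Irr(\cJ(G))$ to be in canonical bijection with $\Irr^{\mathrm{temp}}(G)$: for a tempered $\pi$ one produces a $\cJ(G)$-module from the fact that $\cJ(G)$ acts through the tempered factor of the appropriate parabolic induction, which is the heuristic content of $\cJ(G)$ being the ``$q_F\to\infty$'' limit. Step 2 is to define the trace map $f\mapsto(\pi\mapsto\tr\pi(f))$ on $\cJ(G)/[\cJ(G),\cJ(G)]$ and to show it is a bijection onto precisely the Paley--Wiener space appearing in BDK. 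Step 3 is then formal: the inclusion $\cH(G)\hookrightarrow\cJ(G)$ intertwines the two trace maps, both have the same image in functions on $\Irr^{\mathrm{temp}}(G)$, and both are injective, so \eqref{eq:main-theorem-intro} follows.

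The main obstacle is Step 2, which is the content implied by the paper's title. One direction requires showing that if $\tr\pi(f)=0$ for every tempered $\pi$, then $f\in[\cJ(G),\cJ(G)]$; this is the asymptotic analogue of Kazhdan's density theorem and demands enough abstract Plancherel input for the non-unital, spectrally defined algebra $\cJ(G)$. The other direction requires checking that no ``extra'' elements appear in the image of the $\cJ$-trace map, despite $\cJ(G)$ being larger than $\cH(G)$; that is, one must rule out any slack beyond the classical Paley--Wiener conditions. The Iwahori case handled in \cite{bezrukavnikov-braverman-kazhdan} and the explicit $\GL_n$ description in \S\ref{sec:gln} should provide both a template and a sanity check, but the general case needs a uniform treatment across all Bernstein blocks, most naturally through Bushnell--Kutzko type theory so that every block can be reduced to an affine-Hecke-algebra calculation where Lusztig's asymptotic algebra controls tempered traces.
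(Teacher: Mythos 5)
Your overall skeleton---classify $\Irr(\cJ(G))$, prove a trace Paley--Wiener theorem for $\cJ(G)$, then compare cocenters via trace functionals on the Grothendieck groups---is exactly the paper's strategy, so the high-level plan is sound. However, there are two genuine problems with how you propose to execute it.

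First, your Step~1 misidentifies $\Irr(\cJ(G))$. It is \emph{not} in bijection with $\Irr^{\mathrm{temp}}(G)$. The correct classification (Theorem~\ref{thm:classification-of-irreducibles}) is that irreducible $\cJ(G)$-modules are the full standard modules $I_P^G(\sigma\otimes\chi)$ with $\sigma$ tempered on the Levi $M$ and $\chi$ \emph{strictly positive}; this includes a strictly larger set than tempered irreducibles of $G$ (one only recovers the tempered ones when $P=G$ and $\chi=1$). These are parameterized by Langlands data, i.e., the same set that parameterizes $\Irr(G)$, and the corresponding classes form a basis of $R(G)$ --- that is how one gets $R(\cJ)\simeq R(\cH)$ (Corollary~\ref{cor:grothendieck-group-iso}), which is the precise input needed in Step~3. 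If you work only with tempered $\pi$ you lose part of the dual basis, and both injectivity of the $\cJ$-trace map and the ``same image'' claim in Step~3 become unjustified. The fix is to prove the trace PW theorem with tests against standard modules, which requires the analysis of $\cJ(G,K)^\fd$ modulo maximal ideals of its center in Proposition~\ref{prop:modulo-m} (decomposing $I_P^G(\delta\otimes\chi)$ via intertwining operators into $I_Q^G(\pi\otimes\chi')$-pieces and checking the cocenter of the fiber is a sum of matrix-algebra cocenters).

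Second, your proposed mechanism for handling general Bernstein blocks---reducing to affine-Hecke-algebra calculations via Bushnell--Kutzko type theory---is a nonstarter in the generality claimed. Types of the required kind are a theorem for $\GL_n$ (and some other families), not for an arbitrary reductive $F$-group, and the paper only uses Bushnell--Kutzko in the special $\GL_n$ section. The general argument in the paper deliberately avoids type theory: it uses Waldspurger's Plancherel decomposition into Harish-Chandra blocks (\ref{eq:direct-sum-decomp-C}) and regularity properties of intertwining operators (Lemma~\ref{lem:regular}, Proposition~\ref{prop:plancherel-j}) to control $\cJ(G,K)^\fd$ directly as a $W_{[\delta]}$-equivariant sheaf of endomorphism algebras over $\Psi(M)\delta$, precisely because the canonical basis/cell filtration and type-theoretic reductions are unavailable for the full Hecke algebra. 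You would need this Harish-Chandra--block input, or something equivalent, to make Step~2 go through for all $G$.
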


\begin{rmk}
    In \cite{solleveld-HH}, Solleveld computes the higher Hochschild homology of $\cH(G)$ and $\cC(G)$, the ring of Schwartz functions. It may be interesting to compute the higher Hochschild homology of $\cJ(G)$ and compare the results.
\end{rmk}

\begin{rmk}
    In \cite[Theorem~E]{solleveld-HH} Solleveld observes that the Hochschild homology of $\cH(G)$ decomposes along Harish-Chandra blocks, not just Bernstein blocks. Theorem~\ref{main-thm} gives a conceptual reason why $HH_0(\cH(G))$ admits a better decomposition than expected: although $\cH(G)$ does not decompose along Harish-Chandra blocks, $\cJ(G)$ does.
\end{rmk}

In \cite{bezrukavnikov-braverman-kazhdan} the proof of the isomorphism~\eqref{eq:main-theorem-intro} at Iwahori level uses the canonical basis and the cell filtration on $\cH(G,I)$, neither of which are available for the full Hecke algebra $\cH(G)$. Instead, we use the decomposition~\eqref{eq:direct-sum-decomp-C} in terms of Harish-Chandra blocks.

To prove Theorem~\ref{main-thm}, we first classify irreducible modules of $\cJ(G)$:
\begin{prop}[{Theorem~\ref{thm:classification-of-irreducibles}}]
    The irreducible modules of $\cJ(G)$ are exactly the representations $I_P^G(\sigma\otimes\chi)$, where $\mathbf P=\mathbf M\ltimes \mathbf U\subset \mathbf G$ is a parabolic subgroup, $\chi\colon M\to \C^\times$ is a strictly positive unramified character, and $\sigma$ is an irreducible tempered representation of $M$.
\end{prop}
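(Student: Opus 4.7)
My plan is to reduce the classification to a Plancherel-type description of $\cJ(G)$, block by block, and then match each block to a concrete algebra whose representation theory is easy to analyze.

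First, I would use a Harish-Chandra block decomposition of $\cJ(G)$, analogous to the decomposition~\eqref{eq:direct-sum-decomp-C} of $\cC(G)$ used in the proof of Theorem~\ref{main-thm}, to write
\[
\cJ(G)\simeq\bigoplus_{[\bM,\sigma]}\cJ_{[\bM,\sigma]}(G),
\]
where the sum is over $G$-conjugacy classes of pairs $(\bM,\sigma)$ with $\sigma$ an irreducible discrete series of $M$. This reduces the classification to the study of each block $\cJ_{[\bM,\sigma]}(G)$ separately. The point of Braverman-Kazhdan's definition is that, whereas each block $\cC_{[\bM,\sigma]}(G)$ is built out of smooth functions on the compact unitary torus $X_{\rm unr}^{\rm unit}(M)$ of unramified unitary twists of $\sigma$, the algebra $\cJ_{[\bM,\sigma]}(G)$ is built out of \emph{regular algebraic} functions on the full complex torus $X_{\rm unr}(M)$, twisted by intertwiners for the stabilizer $W(\bM,\sigma)$.

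Second, I would Morita-reduce each block to an algebra of the form $\cO(X_{\rm unr}(M))\rtimes W(\bM,\sigma)$ (possibly twisted by a $2$-cocycle recording $R$-group phenomena). Irreducible modules of such a crossed-product algebra are parametrized, by a standard Clifford-theoretic argument, by pairs $(\chi,\rho)$ with $\chi\in X_{\rm unr}(M)$ and $\rho$ an irreducible representation of the stabilizer $W(\bM,\sigma)_\chi$. I would then identify the module indexed by $(\chi,\rho)$ with a specific irreducible component of the parabolic induction $I_P^G(\sigma\otimes\chi)$, using the fact that the $\cJ$-action on each fiber of the Plancherel decomposition factors through the induction functor.

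Third, I would translate this abstract classification into the language of the proposition by using strict positivity as a choice of fundamental domain. Given any unramified $\chi$, the polar decomposition $\chi=\chi_{\rm unit}\cdot|\chi|$ and the $W(\bM,\sigma)$-action let one transport $|\chi|$ into the strictly positive chamber attached to a parabolic $\bP=\bM\ltimes\bU$, and the resulting tempered representation $\sigma\otimes\chi_{\rm unit}$ plays the role of the tempered parameter. The equivalence with $I_P^G(\sigma\otimes\chi)$ as a $\cJ$-module then comes from Langlands-type uniqueness: strictly positive $\chi$ ensures there is a unique Langlands quotient, and the $\cJ$-action glues the whole induced representation into a single irreducible $\cJ$-module even when $I_P^G(\sigma\otimes\chi)$ is reducible as a $G$-representation.

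The main obstacle is constructing and controlling the $\cJ(G)$-action on $I_P^G(\sigma\otimes\chi)$ for non-unitary strictly positive $\chi$, where the classical Harish-Chandra Plancherel action of $\cC(G)$ does not directly apply. I expect this to require an analytic-continuation argument: one must extend the matrix-coefficient pairing, initially defined on the unitary locus, to a regular pairing on the whole algebraic variety $X_{\rm unr}(M)$, and check that this extension respects the Braverman-Kazhdan description of $\cJ(G)$. Once such an action is in place, the irreducibility as a $\cJ$-module on each $\chi$-fiber follows from the transitivity of the $\cJ$-action, and the non-isomorphism between distinct parameters $(P,\sigma,\chi)$ follows from distinguishing them by their Harish-Chandra block and by the point of $X_{\rm unr}(M)/W(\bM,\sigma)$ they define.
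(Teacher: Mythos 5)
Your high-level strategy matches the paper's: decompose $\cJ(G)$ along Harish-Chandra blocks and then analyze each block via a Plancherel-type description over the algebraic torus $\Psi(M)\delta$. However, your central step — Morita-reducing $\cJ(G)^\fd$ to a (possibly twisted) crossed product $\cO(\Psi(M)\delta)\rtimes W_{[\delta]}$ — is precisely the delicate point where a naive argument fails, and the paper does not take this route. Indeed, this is essentially the error in \cite[\S2.8]{braverman-kazhdan} that the paper notes was pointed out in \cite{geometric-realization}. The example in the paper (due to Krylov, after Proposition~\ref{prop:modulo-m}) shows that for $\mathrm{SL}_2$ at the reducibility point $\alpha=-1$ the fiber $\cJ(G,I)^\fd/\mathfrak m$ is a four-dimensional non-semisimple algebra with a two-dimensional radical; the surjection onto $\bigoplus\End_\C(I_Q^G(\pi\otimes\chi')^K)$ is \emph{not} an isomorphism, only an isomorphism on cocenters. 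So one cannot simply assert a Morita equivalence to a crossed product and apply Clifford theory. Moreover, even if such a Morita equivalence held, Clifford theory in the form you state would index the irreducibles at $\chi$ by $\Irr(W_{[\delta],\chi})$, whereas Silberger's commuting algebra theorem (Proposition~\ref{prop:commuting-alg-theorem}) shows the components of $I_P^G(\delta\otimes\chi)$ are governed by the (projective) $R$-group, which can be a proper subquotient of the stabilizer; ``twisted by a $2$-cocycle'' does not by itself capture the collapse from $W_\chi$ to $R_\chi$.

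What the paper actually proves is weaker and more careful: Corollary~\ref{cor:lattice} establishes only that $\cJ(G,K)^\fd$ is a lattice over $\cO(\Psi(M)\delta)^{W_{[\delta]}}$ sitting between two explicit orders determined by intertwining operators, and Proposition~\ref{prop:modulo-m} then analyzes the completion $\cJ^\land$ at each maximal ideal directly. The heart of that argument is precisely the step you describe as ``the main obstacle'' but do not carry out: for a non-unitary $\chi$, one decomposes $\chi=\chi_1\chi_2$ with $\chi_1$ unitary and $\chi_2$ strictly positive with respect to the minimal parabolic $\bQ=\bL\ltimes\bU'$ dominating all $\bP'\in\cP(\bM)$ for which $\chi$ is non-strictly positive; this factors $J_{P'|P}$ through $I_Q^G(J_{P'\cap L|P\cap L})$ so that Lemma~\ref{lem:regular} guarantees regularity. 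Then one decomposes $I_P^G(\delta\otimes\chi)^K$ as a $\widetilde W_{[\delta]}\times\cH(G,K)$-module, lifts the decomposition to $\widehat\cO_{\mathfrak m}$, and reads off that the diagonal blocks give the irreducibles $I_Q^G(\pi\otimes\chi_2)^K$ while the off-diagonal blocks lie in the radical. Your proposal lacks this pole-control mechanism and this fiber analysis, so as written it would not close the gap.
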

From the proposition we can prove a trace Paley-Wiener theorem (Theorem~\ref{thm:trace-pw-J}) for $\cJ(G)$. Combining the trace Paley-Wiener theorem for $\cJ(G)$ with the classification of irreducible $\cJ(G)$-modules and the Langlands classification, we obtain a diagram of isomorphisms
\[ \begin{tikzcd}
\mathcal H/[\mathcal H,\mathcal H] \arrow{r} \arrow{d}{\simeq} & \mathcal J/[\mathcal J,\mathcal J] \arrow{d}{\simeq} \\
\Hom_{\reg}(R(\mathcal H),\C) \arrow{r}{\simeq}& \Hom_{\reg}(R(\mathcal J),\C).
\end{tikzcd}
\]
Thus the top homomorphism must also be an isomorphism, proving Theorem~\ref{main-thm}.

In \S\ref{sec:prelim} we review the definition of Braverman and Kazhdan's asymptotic Hecke algebra, Bernstein and Harish-Chandra blocks, and the theory of intertwining operators. In \S\ref{sec:gln} we explicitly compute $\cJ(G)$ when $\bG=\GL_n$, using the description of Hecke algebras of Bernstein components in \cite{bushnell-kutzko}, and use it to explicitly describe $\cH(G)/[\cH(G),\cH(G)]$. In \S\ref{sec:classifying-irreps} we go over basic ring-theoretic properties of $\cJ(G)$, and we classify irreducible representations of $\cJ(G)$. In \S\ref{sec:trace-pw} we prove a trace Paley-Wiener theorem for $\cJ(G)$ and as a consequence we prove Theorem~\ref{thm:classification-of-irreducibles}.

\subsection*{Acknowledgements}
The author thanks Roman Bezrukavnikov for suggesting the problem and his numerous helpful suggestions. The author thanks Vasily Krylov for pointing out an error in the original statement and proof of Proposition~\ref{prop:modulo-m}. The author also thanks Stefan Dawydiak, Do Kien Hoang, Ju-Lee Kim, Ivan Losev, and David Vogan for helpful discussions.

\section{Preliminaries}\label{sec:prelim}

\subsection{Braverman-Kazhdan's asymptotic Hecke algebra}

We will summarize the construction of $\cJ(G)$ from \cite{braverman-kazhdan}. Let $\Rep(G)$ be the category of smooth representations of $G$ over $\C$ and let $\Rep_t(G)$ be the subcategory of tempered representations of $G$.

For a reductive group $\mathbf G$ over $F$, let $\Psi(G)$ be the group of unramified characters of $G$, and let $\Psi_u(G)$ be the group of unitary unramified characters of $G$. Note that $\Psi(G)$ is a complex torus and can be viewed as an algebraic variety, and $\Psi_u(G)$ is the maximal compact subgroup.

\subsubsection{Matrix Paley-Wiener theorems}
There are versions of matrix Paley-Wiener theorems for both smooth representations and tempered representations. Observe that for a ($F$-rational) parabolic subgroup $\mathbf P=\mathbf M\ltimes\mathbf U\subset \mathbf G$ and a smooth representation $\sigma\in \Rep(M)$, for all unramified characters $\chi\in\Psi(M)$ the representations $I_P^G(\sigma\otimes \chi)$ can all be realized on the same vector space $I_P^GV_{[\sigma]}=I_{P\cap K_0}^{K_0}\sigma$ where $K_0\subset G$ is a hyperspecial maximal compact subgroup. The Hecke algebra $\cH(G)$ admits a spectral description:
\begin{lemma}[{\cite[Theorem~25]{draft}}]
    There is an isomorphism between the Hecke algebra $\cH(G)$ and the endomorphisms $\eta$ of the forgetful functor $\Forg\colon\Rep(G)\to\Vect_\C$ such that:
    \begin{itemize}
        \item there exists a compact open subgroup $K$ of $G$ such that $\eta$ is $K\times K$-invariant; and 
        \item for all parabolic subgroups $\mathbf P=\mathbf M\ltimes\mathbf U\subset \mathbf G$ and all representations $\sigma\in \Rep(M)$ and $\chi\in\Psi(M)$ the endomorphisms $\eta_{I_P^G(\sigma\otimes\chi)}\in\End_\C(I_P^GV_{[\sigma]})$ are regular functions of $\chi$.
    \end{itemize}
\end{lemma}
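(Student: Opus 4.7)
The plan is to establish the spectral description by constructing the tautological map $\Phi\colon\cH(G)\to\End(\Forg)$, $h\mapsto(\pi_V(h))_V$, verifying that it lands in the described subalgebra, and then proving bijectivity block-by-block via the Bernstein decomposition.

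First I would check that $\Phi$ is well-defined and satisfies the stated conditions. Naturality of $\Phi(h)$ is immediate: any $G$-intertwiner $f\colon V\to W$ commutes with the Hecke action by definition of smoothness. If $h\in\cH(G,K)$, then $\pi_V(h)$ factors through $V^K$ and is $K\times K$-equivariant, giving the first condition. For the second, the family $I_P^G(\sigma\otimes\chi)$ is realized on the fixed space $I_P^GV_{[\sigma]}$, and the action of a fixed $h$ on this space is an integral of $h(g)\,dg$ against matrix coefficients that depend polynomially on $\chi$ (the twist $\chi$ appears only through its values on the Levi factors of the Iwasawa decomposition). Hence $\eta_{I_P^G(\sigma\otimes\chi)}$ is regular in $\chi$. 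Injectivity of $\Phi$ follows from the faithfulness of the left regular action of $\cH(G)$ on itself.

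The main work is surjectivity. Given $\eta$ satisfying both conditions with level $K$, I would invoke Bernstein's finiteness theorem: only finitely many Bernstein components $\mathfrak s_1,\ldots,\mathfrak s_n$ admit representations with nonzero $K$-invariants. Naturality together with the Bernstein decomposition $\Rep(G)=\prod_{\mathfrak s}\Rep_{\mathfrak s}(G)$ forces $\eta_V$ to respect the block decomposition of $V$, and the level condition forces $\eta$ to vanish on all other blocks. It therefore suffices to realize $\eta|_{\mathfrak s_i}$ by an element of $\cH(G,K)_{\mathfrak s_i}$ for each $i$ and sum the results.

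Fix one block $\mathfrak s=[\bM,\sigma]_G$. The category $\Rep_{\mathfrak s}(G)^K$ is equivalent to finite modules over the Bernstein algebra $\cH(G,K)_{\mathfrak s}$, which is a finitely generated module over its center $\cO(\Psi(M))^{W_{\mathfrak s}}$ whose geometric fibers over the Bernstein variety are precisely $I_P^G(\sigma\otimes\chi)^K$ for varying $\chi$. The hypothesis that $\chi\mapsto\eta_{I_P^G(\sigma\otimes\chi)^K}$ is a regular endomorphism-valued function on the fixed space $(I_P^GV_{[\sigma]})^K$ thus determines a regular section of the endomorphism sheaf of the progenerator over the Bernstein variety, and by the matrix Paley-Wiener theorem of Bernstein every such section is realized by a unique element of $\cH(G,K)_{\mathfrak s}$. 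Assembling over $i$ produces the required preimage $h\in\cH(G,K)\subset\cH(G)$.

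The hard part will be this final identification of regular sections with elements of $\cH(G,K)_{\mathfrak s}$: it requires the full structure theorem for Bernstein blocks, including that the progenerator has a description as a parabolic induction of a compactly induced cuspidal, and that its endomorphism algebra is an Azumaya-like algebra over (an open subset of) the Bernstein variety. This is essentially the content of Bernstein's classical matrix Paley-Wiener theorem; once it is granted, all other steps are formal manipulations with the Bernstein decomposition.
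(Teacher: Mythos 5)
The paper does not prove this lemma; it cites it verbatim from Bernstein's unpublished notes as \cite[Theorem~25]{draft}, so there is no internal argument to compare against. Your outline---define the tautological map $\Phi\colon\cH(G)\to\End(\Forg)$, check that $\Phi(h)$ satisfies the level and regularity conditions, get injectivity from faithfulness of the regular representation, and prove surjectivity block-by-block via the Bernstein decomposition and finiteness theorem---is the standard route to the matrix Paley--Wiener theorem, and those parts are correct.

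The surjectivity step, however, is circular as written. After reducing to a single Bernstein block $\mathfrak s$, you invoke ``the matrix Paley--Wiener theorem of Bernstein'' to identify regular $W_{\mathfrak s}$-equivariant families of endomorphisms with elements of $\cH(G,K)_{\mathfrak s}$; but that per-block identification \emph{is} the theorem being proved, up to the formal block decomposition you have carried out (which is the easy part). The genuine content is that $\cH(G,K)_{\mathfrak s}$ is exactly (not merely contained in) the ring of regular endomorphism-valued functions on the corresponding torus, and establishing this requires the structure theory you gesture at---second adjointness, the progenerator and its endomorphism algebra, finiteness of $\cH(G,K)_{\mathfrak s}$ over its center, and a careful treatment at reducibility points and at $\chi$ with nontrivial stabilizer in $W_{\mathfrak s}$, where $\End_G(I_P^G(\sigma\otimes\chi)^K)$ is strictly smaller than the fiber of the endomorphism sheaf. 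You correctly flag this as the hard part, but naming the difficulty does not discharge it, so as a proof this remains a reduction to the statement rather than an argument for it. Given that the paper itself outsources the result to Bernstein's notes, deferring is reasonable---but it should be stated as a citation rather than dressed as a derivation.
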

There is an analogous statement for tempered representations \cite{waldspurger}:
\begin{lemma}
    There is an isomorphism between the Schwartz algebra $\cC(G)$ and the endomorphisms $\eta$ of the forgetful functor $\Forg_t\colon\Rep_t(G)\to\Vect_\C$ such that:
    \begin{itemize}
        \item there exists a compact open subgroup $K$ of $G$ such that $\eta$ is $K\times K$-invariant; and 
        \item for all parabolic subgroups $\mathbf P=\mathbf M\ltimes\mathbf U\subset \mathbf G$ and all tempered representations $\sigma\in \Rep_t(M)$ and $\chi\in\Psi_u(M)$ the endomorphisms $\eta_{I_P^G(\sigma\otimes\chi)}\in\End_\C(I_P^GV_{[\sigma]})$ are smooth functions\footnote{In the real analytic sense; recall that $\Psi_u(M)$ is a compact real analytic manifold.} of $\chi$.
    \end{itemize}
\end{lemma}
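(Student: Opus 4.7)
The plan is to follow Waldspurger's proof of the Harish-Chandra Plancherel theorem for $p$-adic groups. First I would construct the natural homomorphism $\cC(G)\to \End(\Forg_t)$ sending $f$ to the family of operators $(\pi(f))_{\pi\in\Rep_t(G)}$. That this lands in the described subspace is essentially automatic: every Schwartz function is locally constant, hence $K\times K$-invariant for some compact open $K$, yielding the first bullet; and on the fixed model $I_P^G V_{[\sigma]}=I_{P\cap K_0}^{K_0}\sigma$, the operator $I_P^G(\sigma\otimes\chi)(f)$ is an integral of $f$ against kernels that depend on $\chi\in\Psi_u(M)$ only through the modular cocycle $\chi(m(g))$, which is real-analytic in $\chi$ and in fact polynomial in local coordinates on $\Psi_u(M)$; hence the second bullet.

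Injectivity is immediate from Harish-Chandra's Plancherel theorem: the Plancherel measure is supported on tempered representations, so if $\pi(f)=0$ for every tempered $\pi$ then the Fourier inversion forces $f=0$ in $\cC(G)$. For surjectivity, fix a compatible family $\eta$, bi-invariant under $K\times K$. Only tempered $\pi$ with $V_\pi^K\neq 0$ matter; by Harish-Chandra's subrepresentation theorem these are constituents of $I_P^G(\sigma\otimes\chi)$ for $\sigma$ a discrete series of some Levi $\bM$ and $\chi\in\Psi_u(M)$, and after fixing $(\bM,\sigma)$ they are parametrized by a quotient of $\Psi_u(M)$ by a finite group. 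I would then define $f$ by the Plancherel inversion formula
\[
f(g) \colonequals \sum_{[(\bM,\sigma)]}\int_{\Psi_u(M)}\tr\bigl(\eta_{I_P^G(\sigma\otimes\chi)}\circ I_P^G(\sigma\otimes\chi)(g)^{-1}\bigr)\,\mu_{\bM,\sigma}(\chi)\,d\chi,
\]
where $\mu_{\bM,\sigma}$ is the Plancherel density. The $K$-invariance truncates each integrand to a finite-dimensional fixed space so the trace is well-defined, and orthogonality of tempered matrix coefficients under the Plancherel pairing gives $\pi(f)=\eta_\pi$.

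The main obstacle is showing that the right-hand side above indeed lies in the Harish-Chandra Schwartz class $\cC(G)$, not merely in the space of tempered distributions: smoothness of $\eta$ in $\chi\in\Psi_u(M)$ (together with smoothness of the Plancherel density) must translate, via Paley-Wiener-type estimates on the compact real-analytic torus $\Psi_u(M)$, into the rapid decay of $f$ governed by Harish-Chandra's $\Xi$-function. A secondary complication is compatibility of $\eta$ with unitary intertwining operators along the reducibility locus of $I_P^G(\sigma\otimes\chi)$, but this is automatic: since $\eta$ is an endomorphism of the forgetful functor, it commutes with every intertwiner, so the integrand descends to the appropriate quotient by the Weyl symmetry and the $R$-group action without further input.
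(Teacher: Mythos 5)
This lemma is stated in the paper without proof and attributed to Waldspurger's Plancherel theorem; there is no in-paper argument to compare against. Your sketch is indeed a faithful outline of how Waldspurger's proof goes: realize $\cC(G)\to\End(\Forg_t)$, check the image satisfies the two conditions, get injectivity from the support of the Plancherel measure on the tempered dual, and get surjectivity by writing down the candidate preimage via the Plancherel inversion formula. The observation that compatibility with intertwining operators and the $R$-group action is automatic from naturality of $\eta$ is also correct and worth having made explicit.

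Two caveats. First, you yourself flag the real content of Waldspurger's theorem --- showing that the inversion formula produces an element of the Harish-Chandra Schwartz class and not merely a tempered distribution --- and then leave it unresolved. That estimate (uniform bounds on derivatives in $\chi$ against the $\Xi$-function, plus control of the Plancherel density near the walls) is the bulk of \cite{waldspurger}; as written, the proposal is an outline of a proof rather than a proof. Second, the claim that on the fixed model the operator $I_P^G(\sigma\otimes\chi)(f)$ depends on $\chi$ ``polynomial\[ly\] in local coordinates on $\Psi_u(M)$'' is wrong for $f\in\cC(G)$: the cocycle $\chi\mapsto\chi(m(g))$ is an algebraic character for each fixed $g$, but integrating it against a Schwartz function over the non-compact group only yields a $C^\infty$ function of $\chi$; the polynomial (regular-function) dependence is the compactly supported case and is exactly the distinction between $\cH(G)$ and $\cC(G)$ that this lemma is meant to capture. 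The dominated-convergence/differentiation-under-the-integral argument needed to upgrade pointwise real-analyticity of the cocycle to smoothness of the integral uses the Schwartz decay of $f$ and should be stated.
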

The asymptotic Hecke algebra can be defined analogously, sitting between the two rings above: $\cH(G)\subset \cJ(G)\subset \cC(G)$:
\begin{defn}
    Let $\cJ(G)$ be the subring of $\cC(G)$ consisting of endomorphisms $\eta$ of the forgetful functor $\Forg_t\colon \Rep_t(G)\to \Vect_\C$ such that for all parabolic subgroups $\mathbf P=\mathbf M\ltimes \mathbf U\subset\mathbf G$ and all tempered representations $\sigma\in\Rep_t(G)$ the smooth function \begin{align*}
        \Psi_u(M)&\to \End(I_P^GV_{[\sigma]})\\
        \chi&\mapsto\eta_{I_P^G(\sigma\otimes\chi)}
    \end{align*}
    extends to a rational function on $\Psi(M)$ which is regular on the (non-strictly) positive characters $\chi\in X(M)$.
\end{defn}
Given a compact open subgroup $K\subset G$ and $*\in\{\cH,\cJ,\cC\}$ let $*(G,K)\subset *(G)$ be the subring of $K\times K$-invariant functions, and let $\Rep(G,K)\subset\Rep(G)$ be the subcategory of representations generated by its $K$-invariant vectors and let $\Rep_t(G,K)=\Rep(G,K)\cap\Rep_t(G)$. We have \begin{equation}\label{eq:colimit}*(G)=\lim_{\longrightarrow K}*(G,K)\hspace{0.3cm}\text{for }*\in\{\cH,\cJ,\cC\},\end{equation}
so most of the time we can focus our attention on these families of subrings. Then $\cH(G,K)$ is a subring of endomorphisms of the forgetful functor $\Rep(G,K)\to\Vect_\C$ and $\cJ(G,K)$ and $\cC(G,K)$ is a subring of the endomorphsims of the forgetful functor $\Rep_t(G,K)\to \Vect_\C$.
\begin{example}
    When $K=I$ is the Iwahori subgroup, \cite{braverman-kazhdan,geometric-realization} proves that $\cJ(G,I)$ matches the asymptotic Hecke algebra defined by Lusztig in \cite{cells2}.
\end{example}

\subsection{Bernstein decomposition}
The Hecke algebra $\cH(G)$ can be viewed as a sheaf of algebras over $\Theta(G)=\Spec \mathfrak Z(G)$, where $\mathfrak Z(G)$ is the Bernstein center \cite{bernstein-center}. Recall that $\Theta(G)$ decomposes as a disjoint union
\[
\Theta(G)=\bigsqcup_\fs \Theta^\fs(G),
\]
where $\fs$ runs over $G$-conjugacy classes of pairs $[\mathbf M,\sigma]$ where $\mathbf M\subset\mathbf G$ is a Levi subgroup and $\sigma$ is an irreducible supercuspidal representation of $M$, up to twisting by an unramified character of $M$. This induces decompositions:
\begin{equation}\label{eq:direct-sum-decomp}
\cH(G)=\bigoplus_\fs\cH(G)^\fs,\hspace{0.5cm}
\cJ(G)=\bigoplus_\fs\cJ(G)^\fs.
\end{equation}
\begin{example}
    When $\mathbf G$ has a split maximal torus $\mathbf T$ and $[\mathbf T,\triv_T]$ then $\cH(G)^\fs=\cH(G,I)$ is the Iwahori Hecke algebra and $\cJ(G)^\fs=\cJ(G,I)$ is isomorphic to Lusztig's asymptotic Hecke algebra \cite{braverman-kazhdan}.
\end{example}
A completely analogous decomposition, into \emph{Harish-Chandra blocks},\footnote{We use the terminology of \cite[page~6]{solleveld-HH}.} proved in \cite{waldspurger}, we have a decomposition of the category of tempered representations of $G$:
\[
\Rep_t(G)=\bigoplus_{\mathfrak d} \Rep_t^{\mathfrak d}(G),
\]
where $\mathfrak d$ runs over $G$-conjugacy classes of pairs $[\mathbf M,\delta]$ where $\mathbf M\subset \mathbf G$ is a Levi subgroup and $\delta$ is a square-integrable representation of $M$. They give rise to decompositions
\begin{equation}\label{eq:direct-sum-decomp-C}
\cC(G)=\bigoplus_\fd\cC(G)^\fd,\hspace{0.5cm}
\cJ(G)=\bigoplus_\fd\cJ(G)^\fd.
\end{equation}
\begin{rmk}\label{rmk:refinement}
    The decomposition of $\cJ$ in \eqref{eq:direct-sum-decomp-C} is more refined than in \eqref{eq:direct-sum-decomp}; in particular, for a Bernstein block $\fs$ there is a finite set $\Delta_G^\fs$ of Harish-Chandra blocks such that
    \[
    \cJ(G)^\fs=\bigoplus_{\fd\in\Delta_G^\fs}\cJ(G)^\fd.
    \]
\end{rmk}
\begin{rmk}
    By \cite{some-examples} when $\bG$ is split over $F$, the Harish-Chandra blocks are parameterized by a Levi $\bM\subset\bG$ with an admissible triple $(s,u,\rho)$ such that $su=us$ where $s\in \bG^\vee(\C)$ is compact semisimple and $u\in\bG^\vee(\C)$ is unipotent, and $\Cent_{\bG^\vee}(s,u)$ does not contain a nontrivial torus. Thus in particular \eqref{eq:direct-sum-decomp-C} is also a refinement of the decomposition of $\cJ(G,I)$ by two-sided cells.
\end{rmk}

\subsection{Intertwining operators}

The asymptotic Hecke algebra can be understood via intertwining operators from \cite{waldspurger}.

\begin{defn}
    For each Levi subgroup $\bM\subset \bG$, let $\cP(\bM)$ be the set of parabolic subgroups $\bP$ of $\bG$ which have $\bM$ as its Levi component.
\end{defn}
Now, fix a Levi subgroup $\bM\subset \bG$ and a $\Psi(M)$-orbit $[\pi]$ in the set $\Rep(M)/\simeq$ of finite length representations of $M$ modulo isomorphism. For any representative $\pi\in[\pi]$ the orbit $[\pi]=\Psi(M)\pi$ is in bijection with $\Psi(M)/\Psi(M,\pi)$ where
\[
\Psi(M,\pi)\colonequals\{\chi\in\Psi(M):\pi\otimes\chi\simeq\pi\},
\]
is a finite group so $[\pi]$ may be viewed as an algebraic variety. Although the bijection non-canonically depends on the choice of $\pi\in[\pi]$ neither the subgroup $\Psi(M,\pi)\subset \Psi(M)$ nor the algebraic structure on $[\pi]$ depends on the choice. All the representations $\pi\in[\pi]$ (resp., the parabolic inductions $I_P^G\pi$) can be realized on a single vector space, which we denote by $V_{[\pi]}$ (resp., $I_P^GV_{[\pi]}$.) They may be viewed as vector bundles of $M$-modules (resp., $G$-modules) on the algebraic variety $[\pi]$.

Let $B_{[\pi]}\colonequals\Frac\cO([\pi])$. For any two $\bP,\bP'\in \cP(\bM)$, Waldspurger \cite[Th\'eor\`eme~IV.1.1]{waldspurger} defines intertwining operators
\[J_{P'|P}\in B_{[\pi]}\otimes \Hom_G(I_P^GV_{[\pi]},I_{P'}^GV_{[\pi]}),\] i.e., $G$-equivariant homomorphisms $I_P^GV_{[\pi]}\to I_{P'}^GV_{[\pi]}$ defined on a Zariski open subset of $[\pi]$. 
\begin{rmk}\label{rmk:scalar-independence}
    By Schur's lemma $B_{[\pi]}\otimes \Hom_G(I_P^GV_{[\pi]},I_{P'}^GV_{[\pi]})$ is a one-dimensional $B_{[\pi]}$-vector space, so any choice of intertwining operators only differs by a scalar in $B_{[\pi]}$. Thus, we may canonically define a $G\times G$-equivariant map
    \[
B_{[\pi]}\otimes \End_{\sm}(I_P^GV_{[\pi]})\to B_{[\pi]}\otimes \End_{\sm}(I_{P'}^GV_{[\pi]})
\]
where
\[\End_{\sm}I_P^GV_{[\pi]}\colonequals I_P^GV_{[\pi]}\otimes I_P^GV^\vee_{[\pi]}=\lim_{\longrightarrow K}\End_\C\big((I_P^GV_{[\pi]})^K\big)\] denotes the ring of all smooth endomorphisms. Thus, for most of our arguments, the particular choice $J_{P'|P}$ does not play a role.
\end{rmk}
The following provides key control over the location of the poles of intertwiners:
\begin{lemma}[{\cite[Lemme~V.3.1]{waldspurger}}]\label{lem:regular}
    For any Harish-Chandra block $\fd=[\bM,\delta]$ of $\bG$ and $\bP,\bP'\in\cP(\bM)$ there is an intertwining operator in $B_{[\delta]}\otimes\Hom_G(I_P^GV_{[\delta]},I_{P'}^GV_{[\delta]})$ which give unitary isomorphisms on $\Psi_u(M)\delta\subset\Psi(M)\delta$.
\end{lemma}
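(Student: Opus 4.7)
The plan is to normalize the standard intertwining operators using Harish-Chandra's theory of the $\mu$-function (Plancherel density). For $\sigma=\delta\otimes\chi$ with $\chi\in\Psi(M)$ in a suitably positive cone, the standard intertwiner $J^{\mathrm{std}}_{P'|P}(\sigma)\colon I_P^G V_{[\delta]}\to I_{P'}^G V_{[\delta]}$ is given by the convergent integral over $U_{P'}\cap U_P^{-}$, and by Waldspurger's Th\'eor\`eme~IV.1.1 it extends meromorphically to all of $[\delta]$, producing an element of $B_{[\delta]}\otimes\Hom_G(I_P^G V_{[\delta]},I_{P'}^G V_{[\delta]})$. The task is then to rescale by a rational function in $B_{[\delta]}^\times$ so that the result has neither a pole nor a zero on $\Psi_u(M)\delta$ and is an isometry there.

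By Schur's lemma (cf.\ Remark~\ref{rmk:scalar-independence}), the composition $J^{\mathrm{std}}_{P|P'}\circ J^{\mathrm{std}}_{P'|P}$ acts on each irreducible summand by a rational scalar $j_{P'|P}(\sigma)\in B_{[\delta]}$, which is essentially the reciprocal of Harish-Chandra's $\mu$-function. The analytic core of the argument is to invoke Harish-Chandra's regularity theorem for square-integrable representations: when $\delta$ is a discrete series of $M$, the $\mu$-function $\mu(\delta\otimes\chi)$ is a smooth, strictly positive function of $\chi\in\Psi_u(M)$, with neither zeros nor poles on the unitary orbit. Equivalently, $j_{P'|P}$ is regular and nowhere vanishing on $\Psi_u(M)\delta$.

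Given this, I would choose a rational $\alpha_{P'|P}\in B_{[\delta]}^\times$, regular and nonvanishing on $\Psi_u(M)\delta$, with $\alpha_{P'|P}\cdot\alpha_{P|P'}=j_{P'|P}$; such a choice exists because on the compact real-analytic torus $\Psi_u(M)\delta$ the positive function $j_{P'|P}$ admits a smooth square root (which can then be interpolated to a rational function on $[\delta]$). Setting $R_{P'|P}\colonequals\alpha_{P'|P}^{-1}J^{\mathrm{std}}_{P'|P}$, one has $R_{P|P'}\circ R_{P'|P}=\mathrm{id}$ on $\Psi_u(M)\delta$. Combining this with the standard adjunction $J^{\mathrm{std}}_{P'|P}(\sigma)^\ast=J^{\mathrm{std}}_{P|P'}(\sigma)$ for unitary $\sigma$, taken with respect to the canonical Hermitian pairing on $I_P^G V_{[\delta]}$ inherited from the unitary structure on $\delta\otimes\chi$, yields the unitarity of $R_{P'|P}$ pointwise on the unitary orbit.

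The main obstacle is the regularity and positivity of the $\mu$-function on $\Psi_u(M)\delta$ for a square-integrable $\delta$. This is the deepest analytic input; it ultimately reflects the fact that discrete series lie in the interior of the tempered dual and requires the fine structure of Harish-Chandra's Plancherel formula. Once this ingredient is granted, constructing the normalization $\alpha_{P'|P}$ and verifying the unitarity of $R_{P'|P}$ are essentially formal.
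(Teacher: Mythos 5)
The paper gives no proof of this lemma: it is cited verbatim from Waldspurger's Lemme~V.3.1, so there is no internal argument to compare against. Your sketch nevertheless follows the standard route (and the route Waldspurger takes): start from the meromorphically continued standard intertwiner $J^{\mathrm{std}}_{P'|P}$, note $J^{\mathrm{std}}_{P|P'}\circ J^{\mathrm{std}}_{P'|P}=j_{P'|P}\cdot\mathrm{id}$ with $j_{P'|P}$ essentially the reciprocal of Harish-Chandra's $\mu$-function, and rescale by a rational factor $\alpha_{P'|P}$ with $|\alpha_{P'|P}|^2=j_{P'|P}$ on the unitary orbit. You are right that the deep analytic input is the regularity and strict positivity of $\mu(\delta\otimes\chi)$ on $\Psi_u(M)\delta$ for $\delta$ square-integrable, and right that, granted the factorization, unitarity of $R_{P'|P}=\alpha_{P'|P}^{-1}J^{\mathrm{std}}_{P'|P}$ follows from the adjunction $J^{\mathrm{std}}_{P'|P}(\sigma)^*=J^{\mathrm{std}}_{P|P'}(\sigma)$ for unitary $\sigma$.

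The one step whose justification does not hold up as written is the existence of the rational normalizing factor. You argue that $j_{P'|P}$ admits a smooth positive square root on the compact real torus $\Psi_u(M)\delta$ ``which can then be interpolated to a rational function on $[\delta]$.'' A smooth square root of a rational function is generally not rational, and such interpolation is not automatic; what is actually needed is a Fej\'er--Riesz-type factorization of $j_{P'|P}$ as $\alpha_{P'|P}\cdot\alpha_{P|P'}$ with both factors rational and regular nonvanishing on the unitary orbit, and for tori of rank $>1$ this is not a soft fact. The way it is actually established is via the product formula $\mu=\prod_{\alpha}\mu_\alpha$ of the $\mu$-function over the reduced roots of $A_M$ in $G$, which reduces everything to the rank-one case, where $\mu_\alpha$ is an explicit ratio of degree-two polynomials in $q^{-\langle\check\alpha,\cdot\rangle}$ and factors by hand; this is the mechanism behind Waldspurger's normalization (and behind the Langlands--Shahidi/Arthur normalizations). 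With that replacement your outline is correct.
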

\begin{rmk}
    In particular, when $\delta$ is a square-integrable representation, the isomorphism type of the parabolic induction $I_P^G\delta$ is independent of the choice of parabolic subgroup $\bP\in\cP(\bM)$.
\end{rmk}
As a special case of the above constructions, for $w$ in the relative Weyl group $W_\bM\colonequals N_\bG(\bM)/\bM$, we have intertwining operators $J_w$ which are rational homomorphisms
\[
I_P^G(\delta\otimes\chi)\xrightarrow{J_{w^{-1}P|P}} I_{w^{-1}\cdot P}^G(\delta\otimes\chi)\simeq I_P^G(\delta^w\otimes\chi^w).
\]
For example, when $w\in W_{[\delta]}$, the stabilizer of the $W_\bM$-action on $[\delta]\in\Rep(M)/\simeq$, so that $\delta^w\simeq\delta\otimes \chi'$, we have rational intertwining operators
\[
J_w\colon I_P^G(\delta\otimes\chi)\to I_P^G(\delta\otimes\chi^w\chi').
\]
By Lemma~\ref{lem:regular}, when $\delta$ is square-integrable the $J_w$ is regular when $\chi\in\Psi_u(M)$. In this situation, we have:
\begin{prop}[{\cite[Theorem~5.5.3.2]{silberger-book}}]\label{prop:commuting-alg-theorem}
    For a Harish-Chandra block $\fd=[\bM,\delta]$ and unitary unramified characters $\chi,\chi'\in\Psi(M)$, the $G$-equivariant homomorphisms $I_P^G(\delta\otimes\chi)\to I_P^G(\delta\otimes\chi')$ are spanned by intertwining operators $I_w$ such that $\delta^w\otimes\chi^w\simeq \delta\otimes\chi'$.
\end{prop}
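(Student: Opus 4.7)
The plan is to reduce computing $\Hom_G(I_P^G(\delta\otimes\chi), I_P^G(\delta\otimes\chi'))$ to a computation inside the category of $M$-representations via Frobenius reciprocity, and then to understand the resulting Jacquet module explicitly using the geometric (Bernstein--Zelevinsky) lemma. The intertwining operators $J_w$ of the preceding discussion then visibly realize all the relevant components, and the claim becomes that no nontrivial relations exist among them.

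First, by Frobenius reciprocity
\[
\Hom_G(I_P^G(\delta\otimes\chi), I_P^G(\delta\otimes\chi'))\simeq \Hom_M(r_P^G I_P^G(\delta\otimes\chi),\, \delta\otimes\chi'),
\]
where $r_P^G$ is the normalized Jacquet functor. The geometric lemma provides a filtration on $r_P^G I_P^G(\delta\otimes\chi)$ whose associated graded is
\[
\bigoplus_{w\in W_\bM\backslash W_\bG / W_\bM} r_{P\cap P^w}^{M} \big( (\delta\otimes\chi)^w \big)
\]
(after induction from $M^w\cap P$), but because $\delta$ is supercuspidal relative to $\bM$, each surviving term is just $\delta^w\otimes\chi^w$, indexed by the double cosets which carry $\bM$ to itself, i.e., by $W_{\bM}=N_\bG(\bM)/\bM$. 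Thus the semisimplification of $r_P^G I_P^G(\delta\otimes\chi)$ is $\bigoplus_{w\in W_{\bM}} \delta^w\otimes\chi^w$, and the claim to prove is twofold: (i) this filtration actually splits for $\chi$ unitary and $\delta$ square-integrable, so the Jacquet module is semisimple; (ii) each $M$-homomorphism from a summand isomorphic to $\delta\otimes\chi'$ comes, under Frobenius reciprocity, from the intertwining operator $J_w$ of Lemma~\ref{lem:regular}.

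For (i) one uses unitarity: when $\chi\in\Psi_u(M)$ and $\delta$ is square-integrable, the representation $I_P^G(\delta\otimes\chi)$ is unitary tempered, hence completely reducible, and its Jacquet module inherits semisimplicity because the decomposition into Harish-Chandra blocks is compatible with the Jacquet functor (this is essentially Casselman's theory of the canonical pairing, cf.\ \cite{waldspurger}). For (ii) one invokes Lemma~\ref{lem:regular} to see that $J_w$ is a well-defined nonzero $G$-homomorphism in the unitary range, and then one checks on the constant term (Jacquet module) that $J_w$ projects to the $w$-summand nontrivially. Since these summands are the only ones isomorphic to $\delta\otimes\chi'$, and distinct $w$ with $\delta^w\otimes\chi^w\simeq\delta\otimes\chi'$ give linearly independent projections by the semisimplicity in (i), the $J_w$ span the Hom space.

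The main obstacle, as usual in this style of argument, is step (i): the Jacquet module of a parabolically induced representation is not semisimple in general, and establishing semisimplicity in the unitary tempered range requires Harish-Chandra's results on the Plancherel formula (or equivalently Casselman's square-integrability criterion combined with temperedness). Once semisimplicity is available, the matching between Hom-space dimension and the number of $w$ with $\delta^w\otimes\chi^w\simeq\delta\otimes\chi'$ is essentially automatic.
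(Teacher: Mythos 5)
The paper gives no proof of this proposition: it is cited verbatim from Silberger's book, where it is Harish-Chandra's commuting algebra theorem, a deep result whose standard proof runs through the analytic theory of the constant term of Eisenstein integrals and the Maass--Selberg relations. So there is no in-paper argument to compare against; you are attempting to supply a proof, and your proposal has genuine gaps.

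First, in a Harish-Chandra block $\fd=[\bM,\delta]$ the representation $\delta$ is square-integrable, not supercuspidal. Your reduction of the geometric lemma to ``each surviving term is just $\delta^w\otimes\chi^w$'' uses supercuspidality of $\delta$ in an essential way (so that $r_{M\cap P^{w^{-1}}}^M\delta=0$ for proper intersections), and fails here: the Jacquet module $r_P^G I_P^G(\delta\otimes\chi)$ has additional subquotients coming from the exponents of $\delta$ itself. Controlling these extra terms and showing they admit no nonzero $M$-maps to $\delta\otimes\chi'$ requires Casselman's square-integrability criterion and is exactly where the real work lies; it cannot be dismissed.

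Second, step (i) is false as stated. The Jacquet functor does not send semisimple $G$-modules to semisimple $M$-modules, and ``compatibility with Harish-Chandra blocks'' is vacuous here because the Jacquet functor does not preserve temperedness, so there is no Harish-Chandra decomposition of $r_P^G I_P^G(\delta\otimes\chi)$ to invoke. Already for $G=\mathrm{SL}_2$, $\bM=\bT$, $\delta=\triv$: $I_B^G(\triv)$ is irreducible unitary, yet its Jacquet module is a two-dimensional non-split self-extension of $\triv_T$. The same example shows the final claim (``the matching between Hom-space dimension and the number of $w$ is essentially automatic'') cannot be right: there are two $w$ with $\chi^w=\chi$, but $\End_G(I_B^G(\triv))$ is one-dimensional. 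The proposition asserts only that the $I_w$ span, not that they are independent, and any correct proof has to allow for the $I_w$ being linearly dependent. Your argument, even if the other gaps were filled, would be structured to produce a basis, which is stronger than what is true.
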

\begin{rmk}
    In particular, the representations $I_P^G(\delta\otimes\chi)$ are semisimple.
\end{rmk}

Thus when $\delta$ is square-integrable the intertwining operators define a projective unitary representation of $W_{[\delta]}$ on $I_P^G(\delta)$ and the irreducible components are exactly the direct summands of $I_P^G(\delta)$ as a $G$-representation.

Now although operators are only determined up to constants, but by Remark~\ref{rmk:scalar-independence} and Lemma~\ref{lem:regular}, they define well-defined homomorphisms
\[C^\infty(\Psi_u(M)\delta)\otimes\End_\sm(I_P^G(\delta\otimes\chi))\simeq C^\infty(\Psi_u(M)\delta)\otimes\End_\sm(I_P^G(\delta\otimes\chi)).\]
Proposition~\ref{prop:commuting-alg-theorem} gives the Plancherel isomorphism \cite{waldspurger}:
\begin{prop}\label{prop:plancherel}
    Let $\fd=[\bM,\delta]$ be a Harish-Chandra block. Then
    \[
    \cC(G)^\fd\simeq \big(C^\infty(\Psi_u(M)\delta)\otimes\End_\sm(I_P^GV_{[\delta]})\big)^{W_{[\delta]}}.
    \]
    In particular, for any compact open subgroup $K$,
    \[
    \cC(G,K)^\fd\simeq \big(C^\infty(\Psi_u(M)\delta)\otimes \End_\C((I_P^GV_{[\delta]})^K)\big)^{W_{[\delta]}}.
    \]
\end{prop}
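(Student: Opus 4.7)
The plan is to define an evaluation map from $\cC(G)^\fd$ to the right-hand side by restricting to the family of induced representations $I_P^G(\delta \otimes \chi)$, and then identify its image using Proposition~\ref{prop:commuting-alg-theorem}. By the matrix Paley-Wiener theorem for tempered representations, an element $\eta \in \cC(G)^\fd$ is determined by its values on tempered representations, and the assignment $\chi \mapsto \eta_{I_P^G(\delta \otimes \chi)}$ is a smooth function $\Psi_u(M)\delta \to \End_\sm(I_P^G V_{[\delta]})$. This gives a map
\[
\Phi \colon \cC(G)^\fd \longrightarrow C^\infty(\Psi_u(M)\delta) \otimes \End_\sm(I_P^G V_{[\delta]}).
\]
Injectivity of $\Phi$ would follow since $I_P^G(\delta \otimes \chi)$ is semisimple (the remark following Proposition~\ref{prop:commuting-alg-theorem}) and every irreducible tempered representation in the block $\fd$ appears as a summand of some $I_P^G(\delta \otimes \chi)$ by the theory of tempered representations.

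Next I would verify that $\Phi$ lands in the $W_{[\delta]}$-invariants. For $w \in W_{[\delta]}$, write $\delta^w \simeq \delta \otimes \chi'$; by Lemma~\ref{lem:regular} the intertwiner $J_w$ is regular on $\Psi_u(M)$ and gives a $G$-equivariant isomorphism $I_P^G(\delta \otimes \chi) \xrightarrow{\sim} I_P^G(\delta \otimes \chi^w \chi')$. Since $\eta$ is a natural endomorphism of the forgetful functor, it must satisfy
\[
\eta_{I_P^G(\delta \otimes \chi^w \chi')} \circ J_w = J_w \circ \eta_{I_P^G(\delta \otimes \chi)},
\]
which is exactly the $W_{[\delta]}$-invariance of $\Phi(\eta)$ for the natural action that both translates the character by $w$ and conjugates the endomorphism by $J_w$.

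For surjectivity, given a $W_{[\delta]}$-invariant smooth family $\phi$, one defines $\eta_{I_P^G(\delta \otimes \chi)} = \phi(\chi)$ and extends to all tempered representations in block $\fd$ via semisimple decompositions. The crux is that this extension must commute with every $G$-equivariant homomorphism $I_P^G(\delta \otimes \chi) \to I_P^G(\delta \otimes \chi')$ for $\chi, \chi' \in \Psi_u(M)$, and I expect this well-definedness check to be the main obstacle. However, Proposition~\ref{prop:commuting-alg-theorem} reduces it to compatibility with the intertwiners $J_w$ corresponding to $w \in W_{[\delta]}$ with $\delta^w \otimes \chi^w \simeq \delta \otimes \chi'$, which is precisely the $W_{[\delta]}$-invariance of $\phi$. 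The statement for $\cC(G,K)^\fd$ then follows by taking $K \times K$-invariants, noting $(I_P^G V_{[\delta]})^K$ is finite-dimensional so $\End_\sm$ reduces to $\End_\C$.
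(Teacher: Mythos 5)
The paper itself does not write out a proof of this proposition: it simply cites Waldspurger's Plancherel theory, prefaced by the remark that ``Proposition~\ref{prop:commuting-alg-theorem} gives the Plancherel isomorphism.'' Your proposal is essentially the deduction the paper is gesturing at, combining Waldspurger's matrix Paley-Wiener lemma for $\cC(G)$ with the commuting algebra theorem, so your approach is aligned with the paper's intent. A few details deserve more care, though. For injectivity, you should note explicitly that every irreducible tempered representation in the block $\fd$ is a direct summand of $I_P^G(\delta\otimes\chi)$ for some unitary $\chi$ (this is the block structure, not just semisimplicity of a single induction), and that a natural endomorphism of $\Forg_t$ vanishing on all irreducibles in $\Rep_t^\fd(G)$ must vanish on the whole block. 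For surjectivity, the well-definedness check you flag is indeed resolved by Proposition~\ref{prop:commuting-alg-theorem}, but there is a second point you leave implicit: the matrix Paley-Wiener lemma requires smoothness of $\psi\mapsto\eta_{I_Q^G(\sigma\otimes\psi)}$ for \emph{every} Levi $\bL\supseteq\bM$, parabolic $\bQ\in\cP(\bL)$ and tempered $\sigma\in\Rep_t(L)$ in the block, not just for the family $I_P^G(\delta\otimes\chi)$. This follows by embedding $\sigma$ as a summand of $I_{P\cap L}^L(\delta\otimes\chi_0)$ for a fixed $\chi_0\in\Psi_u(M)$ and using transitivity of induction, so that $I_Q^G(\sigma\otimes\psi)$ sits inside $I_P^G(\delta\otimes\chi_0\psi)$ with projections varying smoothly in $\psi$; spelling this out would close the loop. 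Modulo those two points, your derivation is correct.
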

\section{Explicit description of $\cJ(G)$ when $\mathbf G=\GL_n$}\label{sec:gln}

Recall that the asymptotic Hecke algebra $\cJ(G)$ decomposes into a direct sum of $\cJ(G)^\fs$ where $\fs$ are Bernstein blocks. Bernstein blocks of $G$ are of the form:
\begin{equation}\label{eq:bernstein-block}
\fs=[\GL_{n_1}^{r_1}\times\GL_{n_2}^{r_2}\times\cdots\times\GL_{n_k}^{r_k},\sigma_1^{\boxtimes r_1}\boxtimes\sigma_2^{\boxtimes r_2}\boxtimes\cdots\boxtimes \sigma_k^{\boxtimes r_k}],
\end{equation}
where the representations $\sigma_i$ of $\GL_{n_i}$ are supercuspidal and not unramified twists of each other. Let $f_i$ denote the \emph{torsion number} of $\sigma_i$, i.e., the order of the finite cyclic group of unramified characters
\[
\{\eta\in \Psi(\GL_{n_i}(F)):\sigma_i\otimes\eta\simeq\sigma_i\}.
\]Then, the main theorem of \cite{bushnell-kutzko} (see also \cite{karemaker-hecke}) states:
\begin{prop}[{\cite[Theorem~1.1]{bushnell-kutzko}, \cite[\S8.4]{bushnell-kutzko-types}}]\label{prop:BK-iso}
    Let $\bG=\GL_n$ and let $\fs$ be a Bernstein block, as in \eqref{eq:bernstein-block}. Then there is an isomorphism
    \begin{equation}\label{eq:BK-iso}
    \cH(G)^\fs\simeq\bigotimes_{i=1}^k\cH(r_i,q_F^{f_i})
    \end{equation}
    where $\cH(r_i,q_F^{f_i})$ is the affine Hecke algebra of $\GL_{r_i}$ with parameter $q_F^{f_i}$, and $q_F$ is the size of the residue field of $F$. Moreover, these isomorphisms are compatible with twisting, parabolic induction, and preserving tempered representations. 
\end{prop}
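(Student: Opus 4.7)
The plan is to deduce the isomorphism from Bushnell-Kutzko's theory of simple types and covers for $\GL_n$. For each supercuspidal $\sigma_i$ of $\GL_{n_i}(F)$, I would invoke their construction of a maximal simple type $(J_i,\lambda_i)$ inside $\GL_{n_i}(F)$, whose $\lambda_i$-isotypic component captures exactly the Bernstein block generated by $\sigma_i$. For the Levi $\bM=\prod_i\GL_{n_i}^{r_i}$, one then forms the product type $(J_\bM,\lambda_\bM)$ and constructs a $G$-cover $(J,\lambda)$ via the Bushnell-Kutzko cover construction. By the general theory of types, $(J,\lambda)$ is an $\fs$-type, giving a Morita equivalence between $\cH(G)^\fs$ and $\cH(G,\lambda)=\End_G(\text{ind}_J^G\lambda)$.

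The second step is to identify $\cH(G,\lambda)$ with the tensor product $\bigotimes_{i=1}^k\cH(r_i,q_F^{f_i})$. This follows from Bushnell-Kutzko's explicit computation of the intertwining set of $\lambda$ and of the resulting braid and quadratic relations on the generators. The torsion number $f_i$ enters as the power of $q_F$ because the stabilizer of $\sigma_i$ under unramified twists is cyclic of order $f_i$, which inflates the Hecke parameter from $q_F$ to $q_F^{f_i}$ in each tensor factor corresponding to a block of repeated supercuspidals.

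For the compatibility assertions, twisting by unramified characters of $\bM$ intertwines both sides via the natural $\Psi(\bM)$-action on the Bernstein variety $\Theta^\fs$ and the corresponding action on the tensor factors of affine Hecke algebras. Parabolic induction compatibility follows because covers over a standard Levi $\bM'\subset\bG$ sit compatibly inside covers of $\bG$, matching parabolic subalgebras of affine Hecke algebras. Preservation of tempered representations then follows from Casselman's criterion on both sides under the Morita correspondence. The main obstacle is the second step: explicitly matching the intertwining algebra of $\lambda$ with the specified product of affine Hecke algebras with the correct parameters is the technical heart of \cite{bushnell-kutzko-types}, and requires the full machinery of simple strata, simple characters, and $\beta$-extensions in $\GL_n$.
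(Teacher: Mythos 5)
Your proposal correctly reconstructs the Bushnell--Kutzko theory of simple types and $G$-covers that the cited references rely on; the paper itself states this proposition as a direct citation to \cite[Theorem~1.1]{bushnell-kutzko} and \cite[\S8.4]{bushnell-kutzko-types} and gives no independent proof, so there is nothing in the paper to compare your argument against beyond the references you already identify. One small clarification worth making explicit: what Bushnell--Kutzko produce is a Morita equivalence $\cH(G)^\fs\text{-mod}\simeq \cH(G,\lambda)\text{-mod}$ together with an algebra isomorphism $\cH(G,\lambda)\simeq\bigotimes_i\cH(r_i,q_F^{f_i})$; the displayed ``isomorphism'' in the proposition should be read in that sense (the idempotented algebra $\cH(G)^\fs$ and the unital tensor product of affine Hecke algebras are not literally isomorphic as algebras), and you are right to phrase the first step as a Morita equivalence. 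The compatibility with unramified twisting and with parabolic induction indeed comes from the cover formalism and is spelled out in \cite{bushnell-kutzko-types}; preservation of temperedness is a separately established feature of the Bushnell--Kutzko correspondence rather than a formal consequence of Casselman's criterion, so if you wanted to tighten the last sentence you should cite that result directly rather than asserting it follows automatically from the Morita equivalence.
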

In particular, let us identify unramified characters of $\GL_{n_i}(F)$ with $\C^\times$ under the isomorphism:
\[
    \C^\times\simeq \Psi(\GL_{n_i}(F)):
    \alpha_i\mapsto (g\mapsto \alpha_i^{v_F(\det g)}).
\]
Then using notation from \cite{BZ2}, the induced representation, given $\alpha_{i1},\dots,\alpha_{ir_i}\in\C^\times$, \eqref{eq:BK-iso} takes the $\bigotimes_{i=1}^k\cH(r_i,q_F^{f_i})$-module
\[
\bigotimes_{i=1}^kI_{B_{r_i}}^{\GL_{r_i}}(\alpha_{i1}^{f_i},\dots,\alpha_{ir_i}^{f_i})
\]
to
\[
(\alpha_{11}\sigma_1\times\cdots\times\alpha_{1r_1}\sigma_1)\times\cdots\times (\alpha_{k1}\sigma_k\times\cdots\times\alpha_{kr_k}\sigma_k).
\]
In particular, \eqref{eq:BK-iso} preserves Zelevinky's classification of irreducible $\GL_n(F)$-representations. 
\begin{rmk}
    Since Zelevinsky's classification is compactible with the local Langlands correspondence for $\GL_n$, the isomorphism~\eqref{eq:BK-iso} preserves Langlands parameters, in the following sense. Let $\varphi_{\sigma_i}\colon W_F\to \GL_{n_i}(\C)$ be the Langlands parameter of $\pi_i$. The Langlands parameter of an irreducible representation $\pi$ of $\GL_n$ in the Bernstein block $\fs$ is of the form
    \[
    \varphi_1\otimes\rho_1\oplus\varphi_2\otimes\rho_2\oplus\cdots\oplus\varphi_k\otimes\rho_k,
    \]
    where $\rho_i\colon W_F\ltimes\C\to \GL_{r_i}(\C)$ are unramified Langlands parameters. Recall that the data of $\rho_i$ is equivalent to a semisimple $s_i\in\GL_{r_i}(\C)$ together with a unipotent $u\in \GL_{r_i}(\C)$ such that $s_iu_is_i^{-1}=u_i^{q_F}$. Now $s_i^{f_i}u_is_i^{-f_i}=u_i^{q_F^{f_i}}$ so the tuple $(s_i^{f_i},u_i)$ corresponds to a representation $\tau_i$ of $\cH(r_i,q_F^{f_i})$. Under the isomorphism \eqref{eq:BK-iso} the representation $\pi$ corresponds to $\bigotimes_{i=1}^k\tau_i$.
\end{rmk}
Since the parabolic inductions are how we define the asymptotic Hecke algebra, we obtain:
\begin{thm}\label{prop:gln-J-explicit}
     Let $\bG=\GL_n$ and let $\fs$ be a Bernstein block, as in \eqref{eq:bernstein-block}. Then there is an isomorphism
     \begin{equation}\label{eq:gln-J-explicit}
     \cJ(G)^\fs\simeq\bigotimes_{i=1}^k\cJ(r_i),
     \end{equation}
     where $\cJ(r_i)$ are the asymptotic Hecke algebras of $\GL_{r_i}$.
\end{thm}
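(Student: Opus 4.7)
The plan is to observe that every ingredient in the definition of $\cJ(G)$ --- tempered representations, parabolic inductions, and unramified twists --- is preserved by the Bushnell--Kutzko isomorphism \eqref{eq:BK-iso}, so the defining rationality/regularity condition transfers cleanly. The work splits into upgrading BK first to the Schwartz level and then restricting to $\cJ$.

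First I would extend \eqref{eq:BK-iso} to an isomorphism
\[
\cC(G)^\fs \simeq \bigotimes_{i=1}^k \cC(r_i)
\]
of Harish-Chandra Schwartz completions (where $\cC(r_i)$ denotes the Schwartz completion of $\cH(r_i, q_F^{f_i})$). Via the Plancherel decomposition of Proposition~\ref{prop:plancherel} applied on both sides, this reduces to showing that each Harish-Chandra block $\fd = [\bM, \delta] \subset \fs$ on the $\GL_n$ side corresponds under BK to a Harish-Chandra block on the affine side with matching data $(\Psi_u(M)\delta,\, I_P^G V_{[\delta]},\, W_{[\delta]})$; this falls out of the compatibility of BK with parabolic induction and preservation of tempered representations. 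Next I would translate the rationality condition cutting out $\cJ$. By the explicit dictionary preceding the theorem, BK intertwines the twist by the character corresponding to $(\alpha_{ij})$ on the $\GL_n$ side with the twist by $(\alpha_{ij}^{f_i})$ on the affine side. The map $\alpha \mapsto \alpha^{f_i}$ is an isomorphism of algebraic tori (after quotienting by the finite subgroup of torsion characters stabilizing $\sigma$) which preserves positivity, since $|\alpha| \le 1 \iff |\alpha^{f_i}| \le 1$. Hence the condition ``extends to a rational function on $\Psi(M)\delta$ and is regular on positive characters'' transfers verbatim, and restricting the Schwartz-level isomorphism to $\cJ$ yields the claim.

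The step requiring genuine verification rather than formal manipulation is matching the intertwining operators underlying Proposition~\ref{prop:plancherel} on the two sides. Since such operators are determined only up to scalars in $B_{[\delta]}$ (cf.\ Remark~\ref{rmk:scalar-independence}), one must check that the normalizations on the $\GL_n$ side and on the affine Hecke algebra side differ by factors regular on positive characters, so that the $\cJ$-regularity condition is unaffected. This should follow from the compatibility of BK with parabolic induction combined with Lemma~\ref{lem:regular}, but is the main technical point to verify.
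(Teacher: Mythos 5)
Your proposal is correct and takes essentially the same route as the paper. The paper's own proof is a single sentence --- ``Since the parabolic inductions are how we define the asymptotic Hecke algebra, we obtain [the result]'' --- which leans entirely on Proposition~\ref{prop:BK-iso}'s stated compatibility with twisting, parabolic induction, and preservation of tempered representations. Your proposal unpacks that sentence: pass through the Schwartz completion, match Harish-Chandra blocks, reparametrize unramified characters by $\alpha\mapsto\alpha^{f_i}$ (correctly noting positivity and the finite quotient by $\Psi(\GL_{n_i},\sigma_i)$), and then cut down by the rationality condition.

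One remark on the step you flag as the ``main technical point'': the worry about matching intertwining-operator normalizations is not actually needed. The definition of $\cJ(G)$ imposes the rationality/regularity condition directly on the families $\chi\mapsto\eta_{I_P^G(\sigma\otimes\chi)}$ for \emph{every} parabolic $\bP$ and tempered $\sigma$; no intertwining operator appears, so the condition transfers as soon as the Bushnell--Kutzko equivalence is compatible with parabolic induction, twisting, and temperedness, together with the algebraicity and positivity-preservation of $\alpha\mapsto\alpha^{f_i}$. Intertwining operators only enter in the refined Proposition~\ref{prop:plancherel-j}, and even there the $B_\fd$-scalar ambiguity is harmless because conjugation by $J_{P'|P}$ is scalar-independent (Remark~\ref{rmk:scalar-independence}). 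So the verification you single out dissolves once one argues directly from the definition rather than through the Plancherel picture. Also worth stating explicitly (implicit in both your proof and the paper's): the right-hand side of \eqref{eq:BK-iso} has affine Hecke algebras with parameter $q_F^{f_i}$, so identifying their asymptotic Hecke algebras with $\cJ(r_i)$ uses the $q$-independence of Lusztig's $J$.
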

\begin{rmk}
The isomorphism \eqref{eq:gln-J-explicit} also preserves Harish-Chandra blocks. In the notation of Remark~\ref{rmk:refinement}, for $\fs$ as in \eqref{eq:bernstein-block}, the set $\Delta_G^\fs$ of Harish-Chandra blocks $\fd_{\lambda_1,\dots,\lambda_k}$ is parameterized by partitions $\lambda_i=[a_{i1},\dots,a_{i\ell}]$ of $r_i$, obtained by replacing each $[\GL_{n_i}^{r_i},\sigma_i^{\boxtimes r_i}]$ with
\[
[\GL_{n_ia_{i1}}\times\cdots\times \GL_{n_ia_{i\ell}},\mathrm{St}_{a_{i1}}(\sigma_i)\boxtimes\cdots\boxtimes \mathrm{St}_{a_{i\ell}}(\sigma_i)],
\]
where $\mathrm{St}_m(\sigma)$ denotes generalized Steinberg representations. Now \eqref{eq:gln-J-explicit} restricts to:
\[
\cJ(G)^{\fd_{\lambda_1,\dots,\lambda_k}}\simeq\bigotimes_{i=1}^k\cJ(r_i)_{c_{\lambda_i}},
\]
where $c_{\lambda_i}$ is the two-sided cell corresponding to the partition $\lambda_i$ of $r_i$.
\end{rmk}

Our explicit description gives a proof of Theorem~\ref{main-thm} when $\bG=\GL_n$, by the following easy lemma, that taking cocenters commutes with tensor products:
\begin{lemma}\label{lem:HH-commute-tensor}
    Let $A$ and $B$ be unital $\C$-algebras. Then
    \[
    A\otimes B/[A\otimes B,A\otimes B]\simeq A/[A,A]\otimes B/[B,B].
    \]
\end{lemma}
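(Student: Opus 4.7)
The plan is to identify $[A\otimes B,A\otimes B]$ explicitly as the subspace $[A,A]\otimes B+A\otimes [B,B]$, and then apply a standard identification of tensor product quotients. Both directions are elementary and there is no real obstacle; the only mild subtlety is to make sure one does not confuse the commutator subspace (a vector subspace) with the commutator ideal, but for this lemma only the vector space structure matters.

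First I would show the inclusion $[A,A]\otimes B+A\otimes [B,B]\subseteq [A\otimes B,A\otimes B]$ using the identities $[a_1,a_2]\otimes b=[a_1\otimes 1,a_2\otimes b]$ and $a\otimes [b_1,b_2]=[a\otimes b_1,1\otimes b_2]$, which crucially use the fact that $A$ and $B$ are unital. Then for the reverse inclusion I would take a general commutator of pure tensors and write
\[
[a_1\otimes b_1,a_2\otimes b_2]=a_1a_2\otimes b_1b_2-a_2a_1\otimes b_2b_1=[a_1,a_2]\otimes b_2b_1+a_1a_2\otimes [b_1,b_2],
\]
which visibly lies in $[A,A]\otimes B+A\otimes [B,B]$. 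Since such commutators of pure tensors span $[A\otimes B,A\otimes B]$ as a vector space, this proves the desired equality.

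Once this identification is made, the lemma reduces to the purely linear-algebraic statement that for subspaces $M\subseteq A$ and $N\subseteq B$ of $\C$-vector spaces, the natural map
\[
(A\otimes B)/(M\otimes B+A\otimes N)\xrightarrow{\ \simeq\ }(A/M)\otimes (B/N)
\]
is an isomorphism. This is standard and follows, for instance, by applying right-exactness of the tensor product to the short exact sequence $0\to M\to A\to A/M\to 0$ tensored with $B$, and then with the analogous sequence for $N$. Applying this with $M=[A,A]$ and $N=[B,B]$ yields the claim.
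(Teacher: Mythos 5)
Your proof is correct and follows essentially the same approach as the paper: establish the subspace equality $[A\otimes B,A\otimes B]=[A,A]\otimes B+A\otimes[B,B]$ by computing commutators of pure tensors in both directions, then invoke the standard identification of the resulting quotient with the tensor product of quotients. The only cosmetic differences are the particular rearrangement of the pure-tensor commutator (your $[a_1,a_2]\otimes b_2b_1+a_1a_2\otimes[b_1,b_2]$ versus the paper's $[a_1,a_2]\otimes b_1b_2+a_2a_1\otimes[b_1,b_2]$) and that you make the final linear-algebra step explicit where the paper leaves it implicit.
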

\begin{proof}
    It suffices to check
    \begin{equation}\label{eq:commutators}
    [A\otimes B,A\otimes B]=A\otimes [B,B]+[A,A]\otimes B.
    \end{equation}
    The inclusion $\subseteq$ holds since for $a_1,a_2\in A$ and $b_1,b_2\in B$,
    \begin{align*}
    [a_1\otimes b_1,a_2\otimes b_2]&=a_1a_2\otimes b_1b_2-a_2a_1\otimes b_2b_1\\
    &=a_1a_2\otimes b_1b_2-a_2a_1\otimes b_1b_2+a_2a_1\otimes b_1b_2-a_2a_1\otimes b_2b_1\\
    &=[a_1,a_2]\otimes b_1b_2+a_2a_1\otimes[b_1,b_2]
    \end{align*}
    which is in $[A,A]\otimes B+A\otimes[B,B]$. Conversely, $[A,A]\otimes B\subset [A\otimes B,A\otimes B]$ since for $a_1,a_2\in A$ and $b\in B$,
    \[
    [a_1,a_2]\otimes b=[a_1\otimes b,a_2\otimes 1]\in [A\otimes B,A\otimes B].
    \]
    Similarly $A\otimes [B,B]\subset [A\otimes B,A\otimes B]$, which concludes the proof of \eqref{eq:commutators}.
\end{proof}
\begin{proof}[{Proof of Theorem~\ref{main-thm} when $\bG=\GL_n$}]
    By \eqref{eq:direct-sum-decomp} it suffices to check that for any Bernstein block $\fs$ of $\GL_n$ of the form \eqref{eq:bernstein-block},
    \[
    \cH(G)^\fs/[\cH(G)^\fs,\cH(G)^\fs]\simeq \cJ(G)^\fs/[\cJ(G)^\fs,\cJ(G)^\fs].
    \]
    Now by Proposition~\ref{prop:BK-iso} and Theorem~\ref{prop:gln-J-explicit}, together with Lemma~\ref{lem:HH-commute-tensor} we are reduced to checking
    \[
    \cH(r_i,q_F^{f_i})/[\cH(r_i,q_F^{f_i}),\cH(r_i,q_F^{f_i})]\simeq \cJ(r_i)/[\cJ(r_i),\cJ(r_i)].
    \]
    This is proved in \cite{bezrukavnikov-braverman-kazhdan}.
\end{proof}
In fact, we can provide an explicit description of the cocenter of the Hecke algebra:
\begin{prop}
For each Harish-Chandra block $\fd=[\bM,\delta]$ of $\GL_n$, we have
\[
\cJ(G)^\fd/[\cJ(G)^\fd,\cJ(G)^\fd]\simeq \cO(\Psi(M)\delta)^{W_{[\delta]}},
\]
where $W_{[\delta]}$ is the stabilizer of $N_\bG(\bM)/\bM$ acting on the inertial equivalence class $\Psi(M)\delta$ of $\delta\in\Irr_t(M)$. In particular, for each Bernstein block $\fs$ as in \eqref{eq:bernstein-block},
\[
\cH(G)^\fs/[\cH(G)^\fs,\cH(G)^\fs]\simeq \bigotimes_{i=1}^k\bigg(\bigoplus_{\lambda_1+\cdots+\lambda_k=n}\cO(\mathbb G_m^k)^{H_{\lambda}}\bigg),
\]
where $H_\lambda\subset S_k$ is the stabilizer of $(\lambda_1,\dots,\lambda_k)$.
\end{prop}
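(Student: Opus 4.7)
The plan is to combine the trace Paley-Wiener theorem for $\cJ(G)$ (Theorem~\ref{thm:trace-pw-J}) with the classification of irreducibles (Theorem~\ref{thm:classification-of-irreducibles}), exploiting the special features of $\GL_n$ coming from Bernstein-Zelevinsky theory.

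First I would prove the Harish-Chandra block statement. Restricting the trace Paley-Wiener isomorphism to the direct summand $\cJ(G)^\fd$ of~\eqref{eq:direct-sum-decomp-C} yields
\[
\cJ(G)^\fd/[\cJ(G)^\fd,\cJ(G)^\fd]\simeq \Hom_{\reg}(R(\cJ(G)^\fd),\C).
\]
I would then show that for $\GL_n$ the set of irreducible $\cJ(G)^\fd$-modules is exactly $\Psi(M)\delta/W_{[\delta]}$ via the assignment $\chi\mapsto I_P^G(\delta\otimes\chi)$. By Theorem~\ref{thm:classification-of-irreducibles} every irreducible $\cJ(G)^\fd$-module has the form $I_{P'}^G(\sigma\otimes\chi_+)$ with $\sigma$ tempered and $\chi_+$ strictly positive; for $\GL_n$, Bernstein-Zelevinsky rewrites $\sigma$ as $I_{P''}^L(\delta\otimes\chi_u)$ for some $\chi_u\in\Psi_u(M)$, and composing inductions packages the parameters as a single $\chi=\chi_u\chi_+\in\Psi(M)$. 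The fact that $I_P^G(\delta\otimes\chi)\simeq I_P^G(\delta\otimes\chi')$ exactly when $\chi,\chi'$ are $W_{[\delta]}$-conjugate is the usual rigidity for $\GL_n$. The regular functions on the affine quotient $\Psi(M)\delta/W_{[\delta]}$ are then $\cO(\Psi(M)\delta)^{W_{[\delta]}}$, giving the first isomorphism.

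For the second isomorphism, I would invoke Theorem~\ref{main-thm} in the $\GL_n$ case — already established earlier in this section — to identify $\cH(G)^\fs/[\cH(G)^\fs,\cH(G)^\fs]$ with $\cJ(G)^\fs/[\cJ(G)^\fs,\cJ(G)^\fs]$. By Remark~\ref{rmk:refinement} and the explicit enumeration of Harish-Chandra blocks in the remark following Theorem~\ref{prop:gln-J-explicit}, this cocenter splits as a direct sum indexed by tuples of partitions $(\lambda_1,\dots,\lambda_k)$ with $\lambda_i\vdash r_i$. For each such tuple $\bM$ is a product of general linear groups, $\delta$ is a box product of generalized Steinberg representations $\mathrm{St}_{a_{ij}}(\sigma_i)$, so $\Psi(M)\delta\simeq \mathbb G_m^{\ell(\lambda_1)+\cdots+\ell(\lambda_k)}$ with $W_{[\delta]}$ acting as $\prod_i H_{\lambda_i}$. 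Applying the first isomorphism block by block and separating the tensor factors across $i$ yields the stated formula.

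The hardest part is the first step, specifically the $\GL_n$-specific input needed to identify irreducibles of $\cJ(G)^\fd$ with $\Psi(M)\delta/W_{[\delta]}$: one must ensure that $I_P^G(\delta\otimes\chi)$ is irreducible on the relevant locus and that the parametrization is rigid, both consequences of Bernstein-Zelevinsky. Once these are in hand, identifying $\Hom_{\reg}$ with $W_{[\delta]}$-invariant regular functions is formal invariant theory, and the combinatorial bookkeeping to extract the Bernstein-block formula is routine.
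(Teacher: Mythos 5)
Your proof is correct but takes a genuinely different route from the paper's. The paper reduces, via Theorem~\ref{prop:gln-J-explicit}, to the cocenter of the Iwahori-level algebra $\cJ(n)$, decomposes it by two-sided cells $\cJ(n)=\bigoplus_e\cJ(n)_e$, and then invokes the fact (from the geometric realization of $J$) that for $\GL_n$ each $\cJ(n)_e$ is a full matrix algebra over its commutative center; the cocenter of such a ring is its center, which is then read off from Corollary~\ref{cor:lattice}. The $\GL_n$-specific input there is that centralizers of nilpotent elements in $\GL_n$ have trivial component group. Your argument instead runs the trace Paley--Wiener machinery of \S\ref{sec:trace-pw} blockwise: you use Theorem~\ref{thm:trace-pw-J} to identify $\cJ(G)^\fd/[\cJ(G)^\fd,\cJ(G)^\fd]$ with $\Hom_{\reg}(R(\cJ(G)^\fd),\C)$, and then parametrize $\Irr(\cJ(G)^\fd)$ by $\Psi(M)\delta/W_{[\delta]}$ via $\chi\mapsto I_P^G(\delta\otimes\chi)$. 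The $\GL_n$-specific input there is the triviality of the $R$-groups, i.e., irreducibility of unitary parabolic inductions of discrete series (and the rigidity from the Langlands classification), so that the extra summands appearing in Proposition~\ref{prop:modulo-m} collapse to a single one. Both proofs use the same shaped fact about $\GL_n$ packaged differently; the paper's route is shorter and uses Dawydiak's structural result on $J$ as a black box, while yours is self-contained within the paper's own framework but requires forward-referencing \S\ref{sec:classifying-irreps}--\S\ref{sec:trace-pw}, whereas the paper's proof only forward-references Corollary~\ref{cor:lattice}. Both agree on the final step, passing to $\cH(G)^\fs$ via the already-established $\GL_n$ case of Theorem~\ref{main-thm} and the enumeration of Harish--Chandra blocks by tuples of partitions.
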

\begin{proof}
By Theorem~\ref{prop:gln-J-explicit} it suffices to describe the cocenter of the asymptotic Hecke algebra $\cJ(n)$. Recall that each asymptotic Hecke algebra $\cJ(n)$ is decomposed into a direct sum
\[
\cJ(n)=\bigoplus_e\cJ(n)_e\]
running over nilpotent orbits of $\GL_n$, i.e., partitions $\lambda=[n_1,\dots,n_k]$ of $n$. By the same argument as \cite[Remark~2.5.3]{geometric-realization} each $\cJ(n)_e$ are just matrix algebras over its center, hence the cocenter is isomorphic to the center. The center is Corollary~\ref{cor:lattice}, but also follows from the description $\Cent(\cJ(n)_e)\simeq K_{\Cent_e}(*)$ in \cite{geometric-realization}. The description of the cocenter of $\cH(G)^\fs$ follows immediately.
\end{proof}

\section{Basic properties of Braverman and Kazhdan's asymptotic Hecke algebra}\label{sec:classifying-irreps}

\subsection{Finiteness properties}
Let $\fd=[\bM,\delta]$ be a Harish-Chandra block and fix a parabolic $\bP\in\cP(\bM)$. By definition the image of $\cJ(G)^\fd\subset \cC(G)^\fd$ under this isomorphism lands in 
\[
\big(C^{rat}(\Psi_u(M)\delta)\otimes\End_\sm(I_P^GV_{[\delta]})\big)^{W_{[\delta]}},
\]
where $\End_{\sm}$ is defined in Remark~\ref{rmk:scalar-independence} and
\[C^{rat}(\Psi_u(M))\colonequals\{f\in\Frac\mathcal O(\Psi(M)\delta):f\text{ has no poles on }\Psi_u(M)\delta\}\]
is the subring of $C^\infty(\Psi_u(M)\delta)$ which extends to a rational function on $\Psi(M)\delta$. Here again by the bijection $\Psi(M)/\Psi_u(M,\delta)\simeq \Psi(M)\delta$ the set $\Psi(M)\delta$ can be viewed as an algebraic variety over $\C$. In particular, we obtain an embedding
\[
\cJ(G,K)^\fd\hookrightarrow \big(B_\fd\otimes \End_\sm(I_P^GV_{[\delta]})\big)^{W_{[\delta]}}
\]
where we denote $B_\fd\colonequals\Frac\mathcal O(\Psi(M)\delta)$. In fact, we can characterize the image of the embedding:
\begin{prop}\label{prop:plancherel-j}
    Let $\fd=[\bM,\delta]$ be a Harish-Chandra block. Then there is an isomorphism of $\cJ(G,K)^\fd$ with all rational functions $\eta\colon \Psi(M)\delta\to \End_\sm(I_P^GV_{[\delta]})$ such that:
    \begin{enumerate}
        \item\label{item-equivariance} $\eta$ is $W_{[\delta]}$-equivariant; and
        \item\label{item-poles} for any $\bP'\in\cP(\bM)$ the rational endomorphism $I_{P'|P}\circ \eta\circ I_{P'|P}^{-1}$ of $B_\fd\otimes \End_\sm(I_{P'}^GV_{[\delta]})$ is regular for $\chi$ non-strictly positive with respect to $\bP'$.
    \end{enumerate}
\end{prop}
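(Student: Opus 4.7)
The plan is to combine the Plancherel isomorphism (Proposition~\ref{prop:plancherel}) with Waldspurger's intertwining operator calculus of Lemma~\ref{lem:regular}, interpreting condition~\eqref{item-poles} as the translation into the parabolic $\bP'$ of the defining rationality requirement for $\cJ(G)$.

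For $\eta\in\cJ(G,K)^\fd\subset\cC(G,K)^\fd$, Proposition~\ref{prop:plancherel} exhibits $\eta$ as a $W_{[\delta]}$-equivariant smooth section on $\Psi_u(M)\delta$, giving \eqref{item-equivariance}, and the defining rationality condition for $\cJ(G)$ at $\bP$ extends $\eta$ to a rational section over $\Psi(M)\delta$ with values in $\End_\sm(I_P^GV_{[\delta]})$. For \eqref{item-poles}, fix $\bP'\in\cP(\bM)$; on the unitary locus $\Psi_u(M)\delta$, Lemma~\ref{lem:regular} provides an intertwining isomorphism $J_{P'|P}$ such that
\[\eta_{I_{P'}^G(\delta\otimes\chi)} = J_{P'|P}\circ\eta_{I_P^G(\delta\otimes\chi)}\circ J_{P'|P}^{-1},\]
and by Remark~\ref{rmk:scalar-independence} both sides extend to the same rational section over $\Psi(M)\delta$, independent of the scalar ambiguity in $J_{P'|P}$. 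The defining property of $\cJ(G)$ applied at $\bP'$ then demands $\eta_{I_{P'}^G(\delta\otimes\chi)}$ be regular on the non-strictly positive cone of $\bP'$, which is precisely condition~\eqref{item-poles}.

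For the converse, let $\eta$ be a rational section satisfying \eqref{item-equivariance} and \eqref{item-poles}. Every unitary character is non-strictly positive with respect to every $\bP'\in\cP(\bM)$ (since $|\chi(a)|=1$ everywhere), so taking $\bP'=\bP$ in \eqref{item-poles} shows $\eta$ is regular on $\Psi_u(M)\delta$, hence defines an element of $\cC(G,K)^\fd$ by Plancherel. To upgrade this to membership in $\cJ(G,K)^\fd$, I verify the defining rationality-and-regularity property for an arbitrary parabolic $\bP''=\bM''\ltimes\bU''\subset\bG$ and tempered $\sigma''\in\Rep_t(M'')$. Since $\eta\in\cC(G)^\fd$ is supported on the block $\fd$, one may assume $I_{P''}^G(\sigma''\otimes\chi)\in\fd$ for some $\chi$. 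Harish-Chandra's classification of tempered representations realizes $\sigma''$ as a summand of $I_Q^{M''}(\delta''\otimes\chi_0)$ for some square-integrable data $(\bM',\delta'')$ on a Levi $\bM'\subset\bM''$ that is $G$-conjugate to $(\bM,\delta)$; by induction in stages, $I_{P''}^G(\sigma''\otimes\chi)$ is a summand of $I_{P'}^G(\delta''\otimes\chi_0\chi|_{M'})$, where $\bP'\in\cP(\bM')$ is the composite parabolic. After a Weyl conjugation (permitted by \eqref{item-equivariance}), this reduces to condition~\eqref{item-poles} applied at $\bP'$.

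The main obstacle is the root-theoretic verification that the restriction map $\Psi(M'')\to\Psi(M')$ sends the non-strictly positive cone of $\bP''$ into that of $\bP'$, modulo the unitary shift $\chi_0$. This amounts to decomposing the roots of $\bP'$ relative to $\bM'$ into those lying inside $\bM''$, on which $\chi|_{M'}$ is automatically $W(\bM'',\bM')$-invariant and satisfies the non-strict inequality with equality, and those outside $\bM''$, which are roots of $\bP''$ and inherit non-strict positivity from the hypothesis on $\chi$. Settling this combinatorial reduction, together with checking that the Plancherel identification is functorial enough that scalar ambiguities in $J_{P'|P}$ never cause trouble, is the technical crux of the argument.
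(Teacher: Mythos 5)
The paper states Proposition~\ref{prop:plancherel-j} without supplying a proof: the paragraph preceding it only establishes the forward inclusion, namely that $\cJ(G,K)^\fd$ embeds into $W_{[\delta]}$-equivariant rational sections of $\End_\sm(I_P^GV_{[\delta]})$ over $\Psi(M)\delta$. Your proposal supplies the full argument in both directions, and the approach is the natural one.

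Forward direction: you read off \eqref{item-equivariance} from Plancherel (Proposition~\ref{prop:plancherel}) and \eqref{item-poles} from the defining rationality-and-regularity condition of $\cJ(G)$, applied at $\bP'$, transported back to the fixed model $I_P^GV_{[\delta]}$ via $J_{P'|P}$ and Remark~\ref{rmk:scalar-independence}. This matches the paper's (implicit) argument. Converse: given a rational section $\eta$ satisfying \eqref{item-equivariance} and \eqref{item-poles}, you note $\Psi_u(M)\delta$ lies in the non-strictly positive cone of every $\bP'\in\cP(\bM)$, so $\eta\in\cC(G,K)^\fd$, and then reduce the defining condition of $\cJ(G)$ at an arbitrary $(\bP'',\sigma'')$ to condition~\eqref{item-poles} at some $(\bP',\delta)$: Harish-Chandra realizes the tempered $\sigma''$ as a summand of $I_Q^{M''}(\delta''\otimes\chi_0)$ with $\chi_0$ unitary and $(\bM',\delta'')$ conjugate into $[\bM,\delta]$, induction in stages exhibits $I_{P''}^G(\sigma''\otimes\chi)$ inside $I_{P'}^G(\delta''\otimes\chi_0\chi|_{M'})$, and \eqref{item-equivariance} lets you conjugate $(\bM',\delta'')$ to $(\bM,\delta)$. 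This reduction is correct.

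The ``technical crux'' you flag — that restriction $\Psi(M'')\to\Psi(M')$ carries the non-strictly positive cone of $\bP''$ into that of $\bP'=\bQ\ltimes\bU''$ — is in fact short and you should not leave it open: for a coroot $\alpha^\vee$ lying inside $\bM''$ (i.e.\ a root of $\bQ$), the image lands in $\bM''_{\mathrm{der}}$ and hence $\chi\circ\alpha^\vee$ is trivial for any unramified $\chi\in\Psi(M'')$, so the inequality holds with equality; for a coroot of $\bU''$ the inequality is precisely the hypothesis on $\chi$ at $\bP''$; and the unitary shift $\chi_0$ is irrelevant since positivity depends only on $|\cdot|$. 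With that filled in, your argument is complete and fills the gap the paper leaves.
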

As a corollary to Proposition~\ref{prop:plancherel-j}, we obtain a finite generation result:
\begin{cor}\label{cor:lattice} For a Harish-Chandra block $\fd=[\bM,\delta]$ the center of $\cJ(G)^\fd$ is  $\cO(\Psi(M)\delta)^{W_{[\delta]}}$. Moreover, for any compact open subgroup $K$ the ring $\cJ(G,K)^\fd$ is a $\cO(\Psi(M)\delta)^{W_{[\delta]}}$-lattice in $\big(B_\fd\otimes \End_\C(I_P^G(V_{[\delta]})^K)\big)^{W_{[\delta]}}$.
\end{cor}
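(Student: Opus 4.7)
The plan is to deduce both assertions from Proposition~\ref{prop:plancherel-j}, which realizes $\cJ(G,K)^\fd$ as the ring of rational functions $\eta\colon \Psi(M)\delta \to \End_\sm(I_P^GV_{[\delta]})$ satisfying the equivariance condition (1) and the regularity condition (2). For the center, I would first apply Proposition~\ref{prop:commuting-alg-theorem} with $\chi=\chi'$ to see that $\End_G(I_P^G(\delta\otimes\chi))$ is spanned by intertwiners $I_w$ for $w$ in the $W_{[\delta]}$-stabilizer of $\chi\delta$, which is trivial on a Zariski dense open of $\Psi(M)\delta$. A central $z \in Z(\cJ(G)^\fd)$ must then take scalar values at generic points, so under Proposition~\ref{prop:plancherel-j} it corresponds to a scalar rational function on $\Psi(M)\delta$. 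For scalars, the equivariance condition (1) becomes plain $W_{[\delta]}$-invariance, and since scalars commute with intertwiners, (2) reduces to regularity of $z$ on the non-strictly positive cone with respect to each $\bP' \in \cP(\bM)$. Writing an unramified character as $\chi = |\chi|\,\chi_u$ with $\chi_u$ unitary and $|\chi|$ real-positive, the closed Weyl chambers indexed by $\cP(\bM)$ tile $X^*(\bM)_{\mathbb{R}}$, so these cones jointly cover $\Psi(M)\delta$; hence $z$ is regular on all of $\Psi(M)\delta$, yielding $Z(\cJ(G)^\fd) = \cO(\Psi(M)\delta)^{W_{[\delta]}}$.

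For the lattice statement, denote the ambient ring by $\mathcal E \colonequals (B_\fd \otimes \End_\C((I_P^GV_{[\delta]})^K))^{W_{[\delta]}}$; it is finite-dimensional over $B_\fd^{W_{[\delta]}} = \Frac \cO(\Psi(M)\delta)^{W_{[\delta]}}$. Full rank follows by clearing denominators: any $a \in \mathcal E$, multiplied by a suitable $s \in \cO(\Psi(M)\delta)^{W_{[\delta]}}$, lies in $\cO(\Psi(M)\delta) \otimes \End_\C((I_P^GV_{[\delta]})^K)$ and satisfies (2) trivially. For finite generation, I would choose $d \in \cO(\Psi(M)\delta)^{W_{[\delta]}}$ whose zero locus contains the pole and zero divisors of $J_{P'|P}$ and $J_{P'|P}^{-1}$ for all $\bP' \in \cP(\bM)$ (a finite union of hypersurfaces, since $\cP(\bM)$ is finite). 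The key claim is that the polar divisor of any $\eta \in \cJ(G,K)^\fd$ lies in the vanishing locus of $d$: an irreducible pole hypersurface $H$ of $\eta$ not contained in $\{d=0\}$ would meet the non-strictly positive cone of some $\bP' \in \cP(\bM)$ at a point where $J_{P'|P}$ and $J_{P'|P}^{-1}$ are holomorphic and invertible, and there $J_{P'|P} \circ \eta \circ J_{P'|P}^{-1}$ would still have a pole, contradicting (2). Hence $d \cdot \cJ(G,K)^\fd \subseteq \cO(\Psi(M)\delta) \otimes \End_\C((I_P^GV_{[\delta]})^K)$, which is finitely generated over $\cO(\Psi(M)\delta)$ and a fortiori over the Noetherian subring $\cO(\Psi(M)\delta)^{W_{[\delta]}}$. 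Since submodules of finitely generated modules over a Noetherian ring are finitely generated, $\cJ(G,K)^\fd$ is itself finitely generated over $\cO(\Psi(M)\delta)^{W_{[\delta]}}$.

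The hardest step will be the pole-bound claim in the lattice argument: translating the analytic regularity of the conjugate of $\eta$ on a real positive cone into an algebraic constraint on the polar divisor of $\eta$ itself. The key point is that every complex hypersurface in $\Psi(M)\delta$ meets each non-strictly positive cone in a nonempty real-codimension-two subset, so analytic regularity on the cone genuinely forces algebraic regularity away from the fixed intertwiner divisor cut out by $d$.
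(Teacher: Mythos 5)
Your proposal is correct and takes essentially the same route as the paper. For the center, the paper's (terse) argument is exactly your cone-covering one: a central element is a $W_{[\delta]}$-invariant element of $B_\fd$ which, by condition~(2) of Proposition~\ref{prop:plancherel-j} and the fact that every $\chi\in\Psi(M)\delta$ is non-strictly positive with respect to some $\bP'\in\cP(\bM)$, must be regular everywhere; your appeal to Proposition~\ref{prop:commuting-alg-theorem} just unpacks the paper's assertion that $\Cent(\cJ(G)^\fd)\subset B_\fd^{W_{[\delta]}}$ ``is clear.'' For the lattice statement, the paper simply records the sandwich
\[
\big(\cO(\Psi(M)\delta)\otimes\End_\C(I_P^G(V_{[\delta]})^K)\big)^{W_{[\delta]}}\subset\cJ(G,K)^\fd\subset\sum_{\bP'\in\cP(\bM)}J_{P'|P}^{-1}\big(\cO(\Psi(M)\delta)\otimes\End_\C(I_P^G(V_{[\delta]})^K)\big)^{W_{[\delta]}}J_{P'|P},
\]
and your pole-bound argument --- choosing a single $d$ cutting out the divisors of all $J_{P'|P}^{\pm1}$ and showing any pole hypersurface of $\eta$ must lie in $\{d=0\}$ because it meets some closed positive cone at a point where the intertwiners are invertible --- is a careful justification of the right-hand inclusion, driven by the same cone-covering mechanism.

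One detail to tighten in your full-rank step: after clearing denominators so that $sa\in\cO(\Psi(M)\delta)\otimes\End_\C$, condition~(2) of Proposition~\ref{prop:plancherel-j} is not quite automatic, since $J_{P'|P}\,(sa)\,J_{P'|P}^{-1}$ can still acquire poles from the intertwiners themselves; you should multiply by an additional power of your $d$ (or fold the intertwiner denominators into $s$). This is the same care implicitly needed for the paper's left-hand inclusion, so it does not change the structure of the argument.
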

\begin{proof}
    The inclusion $\cO(\Psi(M)\delta)^{W_{[\delta]}}\subset \Cent(\cJ(G)^\fd)$ is clear, for example from Proposition~\ref{prop:plancherel}. Conversely, it is clear $\Cent(\cJ(G)^\fd)\subset B_\fd^{W_{[\delta]}}$. Now, from Proposition~\ref{prop:plancherel-j} we know any central element must be regular everywhere, since any character $\chi\in\Psi(M)$ is positive with respect to some $\bP'\in\cP(\bM)$, so actually $\Cent(\cJ(G))^\fd\subset \cO(\Psi(M)\delta)^{W_{[\delta]}}$.

    For the next statement, it suffices to note the inclusions:
    \[
    (\cO(\Psi(M)\delta)\otimes\End_\C(I_P^G(V_{[\delta]})^K)^{W_{[\delta]}}\subset \cJ(G,K)^\fd\subset \sum_{\bP'\in\cP(\bM)}J_{P'|P}^{-1}(\cO(\Psi(M)\delta)\otimes\End_\C(I_P^G(V_{[\delta]})^K)^{W_{[\delta]}}J_{P'|P}.
    \]
\end{proof}
\begin{rmk}
    For the Iwahori block and for the lowest two-sided cell, the description of the center in Corollary~\ref{cor:lattice}  is \cite[Theorem~3.6]{lowest-cell}.
\end{rmk}

Since $\cJ(G,K)^\fd$ is finitely geneated, it plays well with completion. 

\subsection{Classifying irreducible representations of the asymptotic Hecke algebra} Now, $\cJ(G,K)^\fd$ is a sheaf over the GIT quotient $\Psi(M)\delta/\!/W_{[\delta]}$, and we can describe its fibers.

\begin{prop}\label{prop:modulo-m}
    Let $\fd=[\bM,\delta]$ be a Harish-Chandra block, let $K\subset G$ be a compact open subgroup. For any $\chi\delta\in \Psi(M)\delta$, let $\mathfrak m_{W_{[\delta]}\chi}$ be the corresponding maximal ideal of $\Cent(\cJ(G,K)^\fd)=\cO(\Psi(M)\delta)^{W_{[\delta]}}$. Then $\cJ(G,K)^\fd/\mathfrak m_{W_{[\delta]}\chi}$ has irreducible representations $I_Q^G(\pi\otimes\chi')^K$, where $\mathbf Q=\mathbf L\ltimes\mathbf U'\subset \bG$ is a parabolic subgroup containing $\bM$ such that $\chi'$ is strictly positive with respect to $\mathbf Q$ and $\pi\in\Irr_t(L)^{[\bM,\delta]}$, so that $\pi\otimes\chi'$ is a direct summand of $I_{M}^L(\delta\otimes\chi)$. Moreover, the surjection
    \[
    \cJ(G,K)^\fd/\mathfrak m_{W_{[\delta]}\chi}\to \bigoplus \End_\C(I_Q^G(\pi\otimes\chi')^K)
    \]
    induces an isomorphism on the cocenters.
\end{prop}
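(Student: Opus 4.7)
My plan is to combine the Langlands decomposition of $\chi$ with the Plancherel-type description in Proposition~\ref{prop:plancherel-j}. First, write $\chi = \chi_u \chi'$, where $\chi_u \in \Psi_u(M)$ is unitary and $\chi'$ is strictly positive with respect to a uniquely determined standard parabolic $\mathbf Q = \mathbf L \ltimes \mathbf U' \supseteq \mathbf M$ with Levi $\mathbf L \supseteq \mathbf M$. By Proposition~\ref{prop:commuting-alg-theorem}, the tempered induction $I_M^L(\delta \otimes \chi_u) = \bigoplus_\pi \pi$ decomposes into irreducibles $\pi \in \Irr_t(L)^{[\mathbf M,\delta]}$, and by Lemma~\ref{lem:regular} the intertwiner $J_{Q|P}$ is regular at $\chi$, giving an isomorphism of $G$-representations
\[
I_P^G(\delta \otimes \chi) \simeq I_Q^G(\delta \otimes \chi) \simeq \bigoplus_\pi I_Q^G(\pi \otimes \chi').
\]

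Next, by Proposition~\ref{prop:plancherel-j}, an element $\eta \in \cJ(G,K)^\fd$ is a $W_{[\delta]}$-equivariant rational function whose conjugate $\tilde\eta := J_{Q|P}\eta J_{Q|P}^{-1}$ is regular on the closed positive cone of $\mathbf Q$, which contains $\chi$. Hence the evaluation $\eta \mapsto \tilde\eta(\chi)$ descends to a map from the fiber $A := \cJ(G,K)^\fd/\mathfrak m_{W_{[\delta]}\chi}$ to $\End_\C(I_Q^G(\delta\otimes\chi)^K)$. Since $\cJ(G) \subset \cC(G)$ acts by convolution, the action preserves every $G$-subrepresentation, so the image lies in the block-diagonal subalgebra $\bigoplus_\pi \End_\C(I_Q^G(\pi\otimes\chi')^K)$. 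To establish surjectivity, I would exploit the embedding $\cH(G,K)^\fs \subset \cJ(G,K)^\fs = \bigoplus_{\fd' \in \Delta_G^\fs} \cJ(G,K)^{\fd'}$ coming from Remark~\ref{rmk:refinement} (for the Bernstein block $\fs \supseteq \fd$): the matrix Paley-Wiener theorem supplies polynomial endomorphisms in $\cH(G,K)^\fs$ whose evaluation at $\chi$ realizes any prescribed block-diagonal endomorphism, and projecting along the central idempotent cutting out the $\fd$-summand yields the required element of $A$. Surjectivity then immediately gives irreducibility of each $I_Q^G(\pi\otimes\chi')^K$ as an $A$-module, and distinct $\pi$ produce distinct irreducibles since they are separated by orthogonal central idempotents.

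For the cocenter statement, the kernel of the surjection $A \twoheadrightarrow \bigoplus_\pi \End_\C(I_Q^G(\pi\otimes\chi')^K)$ equals the Jacobson radical of $A$ (the target being semisimple); to prove the induced map on cocenters is an isomorphism one must show this radical lies inside $[A,A]$. I would handle this by describing the radical in terms of rational intertwining operators $J_w$ for $w \in W_{[\delta]}(\chi)$ permuting isotypic components, each expressible as a commutator $[J_w, e]$ with a scalar idempotent $e$ projecting onto a single component. The main obstacle is the surjectivity step: because the standard Langlands modules $I_Q^G(\pi\otimes\chi')$ are in general reducible as $G$-representations, $\cH$ alone realizes only the subalgebra of endomorphisms preserving the Langlands composition series, and the ``off-filtration'' endomorphisms must be produced by genuinely rational elements of $\cJ$ — this is exactly where the regularity conditions of Proposition~\ref{prop:plancherel-j} play their essential role. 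A secondary delicacy is the cocenter identification, which requires a sufficiently explicit handle on the radical of $A$.
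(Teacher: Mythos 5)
Your overall strategy (pass to $\mathbf Q$ via the Langlands decomposition of $\chi$, use Lemma~\ref{lem:regular} to show regularity of intertwiners at $\chi$, then analyze the fiber via evaluation) starts out in the same spirit as the paper, but there are two genuine gaps, the first of which you partially acknowledge.

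First, your surjectivity step does not close. You propose to produce elements of $A$ hitting any prescribed block-diagonal endomorphism by drawing on $\cH(G,K)^\fs$ via the matrix Paley--Wiener theorem. As you yourself note, when the standard modules $I_Q^G(\pi\otimes\chi')$ are reducible as $G$-representations, $\cH$ acting via natural transformations preserves the Langlands filtration, so its image at $\chi$ lies in a proper (block-triangular) subalgebra of $\End_\C(I_Q^G(\pi\otimes\chi')^K)$. Projecting along a central idempotent does not remedy this, since the defect is inside each $\End_\C(I_Q^G(\pi\otimes\chi')^K)$ factor, not between the $\fd$-summands. The paper avoids this obstacle entirely by \emph{not} arguing surjectivity abstractly: it completes $\cJ(G,K)^\fd$ at $\mathfrak m_{W_{[\delta]}\chi}$ and, using the regularity of the intertwiners $J_{P'|P}=I_Q^G(J_{P'\cap L|P\cap L})$ (which follows from Lemma~\ref{lem:regular} because $\chi_1\in\Psi_u(M)$), identifies
$\cJ^\land\simeq\big(\widehat{\cO}_{\mathfrak m_\chi}\otimes \End_\C(I_P^G(V_{[\delta]})^K)\big)^{W_{[\delta]}}$.
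From this explicit description, surjectivity onto the diagonal $\bigoplus_i\End_\C(I_Q^G(V_{[\pi_i]}\otimes\chi_2)^K)$ falls out for free, with no appeal to $\cH$.

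Second, the cocenter argument is too vague and your description of the radical is not quite right. You describe the radical as built from intertwining operators $J_w$, but the $J_w$ themselves are not elements of $\cJ$ (they are only rational and not $W_{[\delta]}$-equivariant), and in any case they are not what populates $\ker(A\to\bigoplus\End)$. The paper's $\mathrm{SL}_2$ example makes this concrete: the kernel is spanned by off-diagonal nilpotent matrix units $e_{12}$ and $ze_{21}$, which arise from the invariant lattice structure of the completion, not from intertwiners. The paper's precise mechanism is to split $\widehat{\cO}_{\mathfrak m_\chi}\otimes I_P^G(V_{[\delta]})^K=\bigoplus_i\widehat\rho_i\boxtimes I_Q^G(V_{[\pi_i]}\otimes\chi_2)^K$ using a finite cover $\widetilde W_{[\delta]}$ of $W_{[\delta]}$ (to deal with the projectivity of the intertwining action — a point your sketch omits), then observe that the $(i,j)$-components of $\cJ^\land$ are $\Hom_{\widetilde W_{[\delta]}}(\rho_i,\rho_j)\boxtimes\Hom_\C(\cdots)$ and that for $i\ne j$ any such element $\phi_{ij}\otimes\eta_{ij}$ equals the commutator $[\mathrm{Id}_{\widehat\rho_i\boxtimes\cdots},\,\phi_{ij}\otimes\eta_{ij}]$, using the presence of the diagonal idempotents in the completed algebra. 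Your instinct that the radical consists of commutators with idempotents is correct, but without the explicit computation of $\cJ^\land$ it is not clear that the relevant idempotents lie in $A$, nor exactly what the radical elements are.
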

\begin{proof}
First let us consider the completion $\cJ^\land$ of $\cJ(G,K)^\fd$ at $\mathfrak m_{W_{[\delta]}\chi}$.

    Fix a parabolic $\bP\in\cP(\bM)$ such that $\chi$ is non-strictly positive with respect to $\bP$. Condition~\eqref{item-poles} of Proposition~\ref{prop:plancherel-j} says nothing unless $\chi$ is non-strictly positive with respect to $\bP'\in\cP(\bM)$. Now there is a minimal parabolic $\mathbf Q=\mathbf L\ltimes\mathbf U'$ containing all $\bP'\in\cP(\bM)$ such that $\chi$ is non-strictly positive with respect to $\bP'$. Now $\chi=\chi_1\chi_2$ where $\chi_1\in\Psi_u(M)$ and $\chi_2\in\Psi(L)$ is strictly positive with respect to $\mathbf Q$. Thus the intertwining operators may be written as
    \[
    J_{P'|P}=I_Q^G(J_{P'\cap L|P\cap L})\colon I_Q^G(I_{P\cap L}^L(\delta\otimes\chi_1)\otimes\chi_2)=I_P^G(\delta\otimes\chi)\to I_P^G(\delta\otimes\chi).
    \]
   By Lemma~\ref{lem:regular}, the intertwining operators $J_{P'\cap L|P\cap L}$ are regular at $\chi_1$, hence $J_{P'|P}$ is also regular. Thus we have an isomorphism
   \[
   \cJ^\land\simeq \big(\widehat{\mathcal O}_{\mathfrak m_\chi}\otimes \End_\C(I_P^G(V_{[\delta]})^K)\big)^{W_{[\delta]}},
   \]
   where now $W_{[\delta]}$ is a subgroup of $N_\bM(\mathbf L)/\mathbf L$. Let $\widetilde W_{[\delta]}$ be a finite covering of $W_{[\delta]}$ (closely related to the group $\widetilde R$ considered in \cite[\S2]{arthur-elliptic}.) Now modulo $\mathfrak m_\chi$ the $\widetilde W_{[\delta]}\times \cH(G,K)$-representation $I_P^G(\delta\otimes\chi)^K$ splits as: 
   \[I_P^G(\delta\otimes\chi)^K=\bigoplus_i \rho_i\boxtimes I_Q^G(\pi_i\otimes\chi_2)^K\]
   for tempered representations $\pi_i$ of $L$, direct summands of $I_{P\cap L}^L(\delta\otimes\chi_1)$.
   We can lift the decomposition to $\widehat\cO_{\mathfrak m_\chi}$:
   \[
   \widehat{\mathcal O}_{\mathfrak m_\chi}\otimes I_P^G(V_{[\delta]})^K=\bigoplus_i \widehat\rho_i\boxtimes I_Q^G(V_{[\pi_i]}\otimes\chi_2)^K,
   \]
   where now $\widehat\rho_i$ are $\widehat\cO_{\mathfrak m_\chi}$-semi-linear representations of $\widehat W_{[\delta]}$. Thus we have an isomorphism of $\widehat\cO_{\mathfrak m_{W_{[\delta]}\chi}}$-algebras
   \[
   \cJ^\land\simeq \bigoplus_{i,j}\Hom_{\widetilde W_{[\delta]}}(\rho_i,\rho_j)\boxtimes \Hom_\C(I_Q^G(V_{[\pi_i]}\otimes\chi_2),I_Q^G(V_{[\pi_j]}\otimes\chi_2)).
   \]
   When $i=j$ by Schur's lemma we have $\Hom_{\widetilde W_{[\delta]}}(\rho_i,\rho_j)=\widehat\cO_{\mathfrak m_{W_{[\delta]}\chi}}$, so the corresponding summand is $\widehat\cO_{\mathfrak m_{W_{[\delta]}\chi}}\boxtimes\End_\C(I_Q^G(V_{[\pi_i]}\otimes\chi_2))$. This gives the surjection
   \begin{equation}\label{eq:surjection}
   \cJ^\land/\mathfrak m_{W_{[\delta]}\chi}\to \bigoplus_i\End_\C(I_Q^G(V_{[\pi]}\otimes\chi_2)).
   \end{equation}
   Now observe that for $i\ne j$ and
   \[
   \phi_{ij}\otimes\eta_{ij}\in \Hom_{\widetilde W_{[\delta]}}(\rho_i,\rho_j)\boxtimes \Hom_\C(I_Q^G(V_{[\pi_i]}\otimes\chi_2),I_Q^G(V_{[\pi_j]}\otimes\chi_2)),
   \]
   we have
   \[
   \phi_{ij}\otimes\eta_{ij}=[\mathrm{Id}_{\widehat\rho_i\boxtimes I_Q^G(V_{[\pi_i]}\otimes\chi_2)^K},\phi_{ij}\otimes\eta_{ij}],
   \]
   which shows \eqref{eq:surjection} induces an isomorphism on cocenters. Moreover, we also see that components corresponding to $i\ne j$ are in the radical of $\cJ^\land/\mathfrak m$, which proves the classification of irreducible $\cJ/\mathfrak m$-modules.
\end{proof}
\begin{example}
    As pointed out to me by Vasily Krylov, the surjection $\cJ(G,K)^\fd/\mathfrak m_{W_{[\delta]}\chi}\to \bigoplus \End_\C(I_Q^G(\pi\otimes\chi')^K)$ need not be an isomorphism. For example, consider the Harish-Chandra block $[\mathbf T,\triv]$ of $\bG=\mathrm{SL}_2$ and let $K=I$ be the Iwahori subgroup. Then unramified characters of $T$ are parameterized by $\alpha\in\C^\times$, with the Weyl group acting as $\alpha\mapsto\alpha^{-1}$. The intertwining operator $I_B^G(\alpha)\to I_B^G(\alpha^{-1})$ is given by $\mathrm{diag}(\frac{q-\alpha}{q\alpha-1},1)$. Thus completing at $\alpha=-1$, \[\cJ(G,I)^\land\simeq M_2(\C[\![\alpha+1]\!])^{C_2}\]
    is isomorphic to, letting $z=\alpha+2+\alpha^{-1}$, the $\C[\![z]\!]$-algebra
    \[
    \begin{pmatrix}
        \C[\![z]\!]& \C[\![z]\!]\\
         z\C[\![z]\!]& \C[\![z]\!]
    \end{pmatrix}\subset M_2(\C[\![z]\!]).
    \]
    Thus $\cJ/\mathfrak m$ is a four-dimensional algebra with basis $e_{11},e_{12},ze_{21},e_{22}$ and radical spanned by $e_{12}$ and $ze_{21}$. This is the same issue with the proof in \cite[\S2.8]{braverman-kazhdan} as pointed out in \cite{geometric-realization}.
\end{example}

As an immediate corollary, we obtain:
\begin{thm}\label{thm:classification-of-irreducibles}
    The irreducible modules of $\mathcal J(G)$ are exactly the representations $I_P^G(\sigma\otimes\chi)$, where $\mathbf P=\mathbf M\ltimes \mathbf U\subset \mathbf G$ is a parabolic subgroup, $\chi\colon M\to \C^\times$ is a strictly positive unramified character, and $\sigma$ is an irreducible tempered representation of $M$.
\end{thm}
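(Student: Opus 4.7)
The plan is to derive the classification as an essentially immediate consequence of Proposition~\ref{prop:modulo-m}, with the bulk of the work going into two routine reductions. First, I would invoke the Harish-Chandra block decomposition $\cJ(G)=\bigoplus_\fd \cJ(G)^\fd$ from \eqref{eq:direct-sum-decomp-C} to split the classification along Harish-Chandra blocks $\fd=[\bM,\delta]$, so it suffices to enumerate irreducibles of each $\cJ(G)^\fd$. Then, using the colimit $\cJ(G)^\fd=\varinjlim_K \cJ(G,K)^\fd$ of \eqref{eq:colimit}, any irreducible $\cJ(G)^\fd$-module is recovered from its $K$-fixed vectors for all sufficiently small compact open $K\subset G$, which reduces the problem to enumerating irreducible $\cJ(G,K)^\fd$-modules.

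Next, I would apply Corollary~\ref{cor:lattice}, which asserts that $\cJ(G,K)^\fd$ is finitely generated over its center $\cO(\Psi(M)\delta)^{W_{[\delta]}}$. By standard commutative algebra for algebras finite over their centers, every irreducible module is then finite-dimensional and annihilated by a unique maximal ideal $\mathfrak m_{W_{[\delta]}\chi}$, so it factors through the fiber algebra $\cJ(G,K)^\fd/\mathfrak m_{W_{[\delta]}\chi}$. Proposition~\ref{prop:modulo-m} enumerates the irreducibles of this fiber as $I_Q^G(\pi\otimes\chi')^K$, where $\mathbf Q=\mathbf L\ltimes\mathbf U'\supseteq\bM$ is the parabolic with respect to which $\chi$ decomposes as $\chi_1\chi'$ with $\chi_1\in\Psi_u(M)$ and $\chi'\in\Psi(L)$ strictly positive with respect to $\mathbf Q$, and $\pi$ is an irreducible tempered summand of $I_{M}^L(\delta\otimes\chi_1)$.

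Reassembling over $K$, $\fd$, and $W_{[\delta]}\chi$ produces precisely the representations $I_P^G(\sigma\otimes\chi)$ of the theorem statement. For the converse direction---that every such induction genuinely arises as an irreducible $\cJ(G)$-module---one would embed an arbitrary tempered $\sigma$ as a direct summand of a parabolically induced discrete series $I_{M_0}^M(\delta\otimes\chi_1)$ via its Harish-Chandra block, and then read off the conclusion from Proposition~\ref{prop:modulo-m}. The main conceptual subtlety, rather than an outright obstacle, is that the irreducibles produced by the proposition are \emph{a priori} indexed by the triples $(\mathbf Q,\pi,\chi')$ extracted from a given character $\chi$, and one must observe that strict positivity uniquely determines $\mathbf Q$ (and hence $\pi$ and $\chi'$) so that the parameterization in the theorem is redundancy-free; this plays the same role as the uniqueness in the Langlands classification.
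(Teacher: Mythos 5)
Your proposal is correct and fills in precisely the details the paper leaves implicit when it calls Theorem~\ref{thm:classification-of-irreducibles} an ``immediate corollary'' of Proposition~\ref{prop:modulo-m}: the block decomposition, the reduction from $\cJ(G)$ to $\cJ(G,K)^\fd$ via the colimit, the use of Corollary~\ref{cor:lattice} (module-finiteness over the center) to produce a central character and pass to the fiber algebra $\cJ(G,K)^\fd/\mathfrak m_{W_{[\delta]}\chi}$, and then the appeal to Proposition~\ref{prop:modulo-m}. This matches the paper's intended route; the remark at the end about strict positivity pinning down $\mathbf Q$ (and hence the uniqueness of the parametrizing triple) is a useful observation but is not actually asserted by the theorem statement, which only claims the irreducibles coincide with the listed set.
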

\begin{rmk}
    For the Iwahori block, this is \cite[Theorem~4.2]{cells4}.
\end{rmk}

Recall the Langlands classification of irreducible representations of $G$:
\begin{prop}[{\cite[\S XI.2]{borel-wallach}, \cite[Theorem~4.1]{silberger:langlands-quotient}}]\label{prop:langlands-classification}
    For any irreducible representations of $\pi$ there is a tuple $(\mathbf P,\sigma,\chi)$, called a standard triple, unique up to $G$-conjugacy, where $\mathbf P=\mathbf M\ltimes\mathbf U\subset\mathbf G$ is a parabolic subgroup, $\sigma\in\Rep_t(M)$ is irreducible, and $\chi\in\Psi(M)$ is strictly positive (with respect to $\mathbf P$), such that $\pi$ is the unique irreducible quotient of $I_P^G(\sigma\otimes\chi)$. Moreover, the classes $[I_P^G(\sigma\otimes\chi)]$ form a basis of the Grothendieck group $R(G)$ of admissible finite-length representations of $G$.
\end{prop}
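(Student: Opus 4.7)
The plan is to prove the Langlands classification via Casselman's theory of Jacquet modules, followed by a triangularity argument in the Grothendieck group to deduce the basis statement. First, given an irreducible admissible $\pi$, I would realize it as a subquotient of some $I_{P_0}^G\sigma_0$ with $\sigma_0$ supercuspidal, using Bernstein's cuspidal support theorem. Among all pairs $(\bP, \tau)$ with $\bP = \bM\ltimes\mathbf{U}$ containing $\bP_0$ and $\tau$ an irreducible representation of $M$ such that $\pi$ is a quotient of $I_P^G\tau$, I would choose one where the real part of the central character of $\tau$ on the split center of $\bM$ is extremal (closest to the walls of the positive chamber). Writing $\tau\simeq\sigma\otimes\chi$ via the polar decomposition of unramified characters, the extremality together with Casselman's temperedness criterion applied to the exponents of $\tau$ inside $M$ forces $\sigma$ to be tempered and $\chi$ to be strictly positive with respect to $\bP$.

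To prove that $I_P^G(\sigma\otimes\chi)$ admits a unique irreducible quotient, I would compute its Jacquet module along $\bP$ using the Bernstein-Zelevinsky geometric lemma. Strict positivity of $\chi$ makes $\sigma\otimes\chi$ the unique extremal exponent, and Frobenius reciprocity then forces any irreducible quotient of $I_P^G(\sigma\otimes\chi)$ to have $\sigma\otimes\chi$ in its Jacquet module, pinning down a unique such quotient. If $\pi$ were the Langlands quotient of another standard triple $(\bP',\sigma',\chi')$, comparing the extremal exponents in the Jacquet modules of $\pi$ along $\bP$ and $\bP'$ would force the two triples to be $G$-conjugate.

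For the basis statement, I would order standard triples by the real central character in the positive cone. The geometric-lemma analysis yields, in $R(G)$, a decomposition
\begin{equation*}
[I_P^G(\sigma\otimes\chi)] = [\pi_{(\bP,\sigma,\chi)}] + \sum_{(\bP',\sigma',\chi')<(\bP,\sigma,\chi)} c_{(\bP',\sigma',\chi')}\,[\pi_{(\bP',\sigma',\chi')}],
\end{equation*}
so the change of basis from standard modules to irreducibles is unitriangular, whence the standards form a basis. The main obstacle is making the exponent-ordering argument precise and verifying uniformly that the ``leading exponent'' $\sigma\otimes\chi$ is strictly dominant among all the exponents produced by the geometric lemma; strict positivity of $\chi$ together with Casselman's temperedness criterion resolve this, but the combinatorial check on root-system geometry is the technical heart of the proof.
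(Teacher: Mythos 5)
The paper does not give a proof of this proposition: it is quoted from Borel--Wallach (\S XI.2) and Silberger's Langlands quotient paper, so there is no internal argument to compare against. What you have written is a sketch of the classical Langlands classification argument that those references carry out, via cuspidal support, Jacquet-module exponents, Casselman's temperedness criterion, and a unitriangular change of basis between standard modules and irreducibles.

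As a sketch the architecture is right, but two points would not survive being made precise. The selection rule ``choose $(\bP,\tau)$ so that the real part of the central character is closest to the walls of the positive chamber'' is not the correct criterion: the standard argument takes each exponent $\nu$ of $\pi$ along a minimal parabolic, projects it onto the closed dominant cone, selects the exponent whose projection is maximal in the dominance order, and reads $\bM$ (hence $\bP$) off from the open face of the cone containing that projection; Casselman's criterion then yields temperedness of the residual part $\sigma$ inside $M$. ``Closest to the walls'' would push the real exponent toward the boundary of the chamber, which is in tension with the requirement that $\chi$ be strictly positive, i.e.\ in the open cone away from the walls. Second, you correctly identify the root-theoretic dominance lemma --- that in the geometric-lemma decomposition of $r^G_{P_0}I^G_P(\sigma\otimes\chi)$ the trivial Weyl twist strictly dominates every nontrivial one in the exponent partial order --- as the technical heart, but you do not supply it. That lemma is precisely where the cited references do the work: it is the single step on which the uniqueness of the Langlands quotient, the uniqueness of the standard triple, and the unitriangularity (hence the basis statement) all rest. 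As a blind reconstruction of the cited proof your outline is sound; as a self-contained argument the gap is at that lemma.
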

Thus, Theorem~\ref{thm:classification-of-irreducibles} has the following corollary:
\begin{cor}\label{cor:grothendieck-group-iso}
    The pullback homomorphism $R(\cJ)\to R(\cH)$ induced from the inclusion $\cH\hookrightarrow\cJ$ is an isomorphism.
\end{cor}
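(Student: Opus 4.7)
The plan is to exhibit matching $\Z$-bases on the two Grothendieck groups and verify that the pullback map sends one bijectively to the other. By Theorem~\ref{thm:classification-of-irreducibles}, a basis of $R(\cJ)$ is given by the irreducible $\cJ(G)$-modules, which are exactly the parabolic inductions $I_P^G(\sigma\otimes\chi)$ as $(\mathbf P,\sigma,\chi)$ ranges over standard triples in the Langlands sense. By Proposition~\ref{prop:langlands-classification}, the classes $[I_P^G(\sigma\otimes\chi)]_{\cH}$ for the same set of parameters form a basis of $R(\cH)$. Since the inclusion $\cH\subset\cJ$ acts on the same underlying vector space, the pullback map is literally the identity on these labels: $[I_P^G(\sigma\otimes\chi)]_{\cJ}\mapsto[I_P^G(\sigma\otimes\chi)]_{\cH}$. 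This gives a bijection between the two bases, and hence an isomorphism of free abelian groups.

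The one point that needs care — and it is really the only obstacle — is confirming that two non-conjugate standard triples produce non-isomorphic $\cJ$-irreducibles, so that the parameters appearing in Theorem~\ref{thm:classification-of-irreducibles} genuinely index a basis and not merely a spanning set. Here I would argue as follows: any isomorphism of $\cJ$-modules restricts to an isomorphism of the underlying $G$-representations, and by Proposition~\ref{prop:langlands-classification} the classes $[I_P^G(\sigma\otimes\chi)]_{\cH}$ for distinct $G$-conjugacy classes of triples are linearly independent basis elements, so the corresponding $G$-representations — and a fortiori the corresponding $\cJ$-modules — are pairwise non-isomorphic. Everything else is formal once Theorem~\ref{thm:classification-of-irreducibles} and Proposition~\ref{prop:langlands-classification} are both in hand.
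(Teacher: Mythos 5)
Your proof is correct and follows the same route as the paper: both cite Theorem~\ref{thm:classification-of-irreducibles} and Proposition~\ref{prop:langlands-classification} to identify the same collection of classes $[I_P^G(\sigma\otimes\chi)]$ indexed by standard triples as a $\Z$-basis of each Grothendieck group, which the pullback matches up tautologically. Your extra paragraph on injectivity (distinct conjugacy classes of standard triples give non-isomorphic $\cJ$-irreducibles, by restricting to the $G$-module structure and invoking linear independence in $R(\cH)$) is a valid and worthwhile clarification of a point the paper leaves implicit.
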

\begin{proof}
    By Theorem~\ref{thm:classification-of-irreducibles} and Proposition~\ref{prop:langlands-classification} the classes $[I_P^G(\sigma\otimes\chi)]$ form a basis of both $R(\cJ)$ and $R(\cH)$, where $(\mathbf P,\sigma,\chi)$ are standard triples.
\end{proof}

\section{Trace Paley-Wiener theorem for the asymptotic Hecke algebra}\label{sec:trace-pw}
Let us recall the trace Paley-Wiener theorem for $\mathcal H(G)$: 
\begin{prop}[{\cite[Theorem~1.2]{trace-pw}}]\label{prop:trace-pw-H}
    Let $R(G)$ be the Grothendieck group of admissible finite-length $G$-modules and let $\Hom_{\reg}(R(G),\C)$ be the vector space of homomorphisms $\varphi\colon R(G)\to \C$ such that:
    \begin{itemize}
        \item there exists a compact open subgroup $K$ of $G$ such that $f([\pi])=0$ for all representations $\pi\in\Rep(G)$ with no $K$-fixed vectors; and
        \item for any parabolic subgroup $\mathbf P=\mathbf M\ltimes\mathbf U\subset \mathbf G$ and representation $\sigma\in\Rep(M)$, the function $f([I_P^G(\sigma\otimes\chi)])$ is a regular function of $\chi\in \Psi(M)$.
    \end{itemize}
    Then trace provides an isomorphism $\cH/[\cH,\cH]\simeq\Hom_{\reg}(R(G),\C)$.
\end{prop}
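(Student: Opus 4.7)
The trace map $f\mapsto([\pi]\mapsto\tr\pi(f))$ manifestly vanishes on commutators, so it descends to a map $T\colon\cH(G)/[\cH(G),\cH(G)]\to\Hom(R(G),\C)$; the image lies in $\Hom_{\reg}$ because, for $f\in\cH(G,K)$, we have $\pi(f)=0$ whenever $\pi$ has no $K$-fixed vectors, and because the matrix Paley-Wiener theorem expresses $\pi(f)$ on $I_P^G(\sigma\otimes\chi)$ as a regular function of $\chi\in\Psi(M)$.  The plan is to prove that $T$ is an isomorphism by reducing to a single Bernstein component, where both sides can be computed explicitly from the matrix Paley-Wiener description.

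Via the colimit presentation~\eqref{eq:colimit} and the Bernstein decomposition~\eqref{eq:direct-sum-decomp}, it suffices to fix a compact open $K$ and a block $\fs=[\bM,\sigma]$ and show that $T$ induces an isomorphism
\[
\cH(G,K)^\fs/[\cH(G,K)^\fs,\cH(G,K)^\fs]\simeq\Hom_{\reg}(R(G,K)^\fs,\C).
\]
The matrix Paley-Wiener theorem identifies
\[
\cH(G,K)^\fs\simeq\bigl(\cO(\Psi(M)\sigma)\otimes\End_\C(I_P^GV_{[\sigma]}^K)\bigr)^{W_{[\sigma]}},
\]
where $W_{[\sigma]}$ acts on the endomorphism factor by conjugation with normalized intertwiners and on $\Psi(M)\sigma$ by translation.

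To compute the left-hand cocenter I would imitate the argument of Proposition~\ref{prop:modulo-m}: pass to the completion of $\cH(G,K)^\fs$ at each closed point $W_{[\sigma]}\chi$ of $\Psi(M)\sigma/\!/W_{[\sigma]}$, decompose $I_P^G(\sigma\otimes\chi)^K=\bigoplus_i\rho_i\boxtimes\pi_i$ under $\widetilde W_{[\sigma]}\times G$, observe that the off-diagonal pieces $\Hom_{\widetilde W_{[\sigma]}}(\rho_i,\rho_j)\otimes\Hom_\C(\pi_i,\pi_j)$ for $i\ne j$ are commutators with the block identities, and that the diagonal pieces contribute their fiberwise trace; gluing the local computations over $\Psi(M)\sigma/\!/W_{[\sigma]}$, the cocenter maps isomorphically onto $\cO(\Psi(M)\sigma)^{W_{[\sigma]}}$ via $T$.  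For the right-hand side I would use the Langlands classification (Proposition~\ref{prop:langlands-classification}) restricted to $\fs$: the classes $[I_P^G(\sigma\otimes\chi)]$, modulo the $W_{[\sigma]}$-action on $\chi$, span $R(G,K)^\fs$, so a regular homomorphism $R(G,K)^\fs\to\C$ is exactly a $W_{[\sigma]}$-invariant regular function on $\Psi(M)\sigma$, and the two copies of $\cO(\Psi(M)\sigma)^{W_{[\sigma]}}$ are matched by $T$.

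The main obstacle I anticipate is the cocenter computation: at characters $\chi$ whose $W_{[\sigma]}$-stabilizer is non-trivial the block decomposition of $I_P^G(\sigma\otimes\chi)^K$ is subtle, and one must verify that formation of cocenters commutes with the completion at $\mathfrak m_{W_{[\sigma]}\chi}$, so that the local matrix-algebra computations genuinely glue back to a statement about $\cH(G,K)^\fs$.  The appearance of exactly this subtlety in Proposition~\ref{prop:modulo-m} for $\cJ$ — where the surjection $\cJ/\mathfrak m\to\bigoplus\End(\cdot)$ fails to be an isomorphism but nonetheless induces an isomorphism on cocenters — suggests that the argument is cleanest at the level of cocenters, rather than at the level of the ambient algebras.
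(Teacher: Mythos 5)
The paper does not prove this Proposition: it is Theorem~1.2 of the cited reference (Bernstein--Deligne--Kazhdan), and is used as a black box. So your proposal should be evaluated as an attempt to reprove a known theorem, and unfortunately it has two serious gaps, both stemming from the same misconception.

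The first gap is the claimed isomorphism
\[
\cH(G,K)^\fs\simeq\bigl(\cO(\Psi(M)\sigma)\otimes\End_\C(I_P^GV_{[\sigma]}^K)\bigr)^{W_{[\sigma]}}.
\]
This is \emph{not} what the matrix Paley--Wiener theorem says, and it is false. The clean Plancherel-type isomorphism of Proposition~\ref{prop:plancherel} holds for the Schwartz algebra $\cC(G)^\fd$, over the compact torus $\Psi_u(M)$, precisely because intertwining operators are regular on the unitary locus (Lemma~\ref{lem:regular}). For $\cH(G,K)^\fs$ over the full complex torus $\Psi(M)\sigma$ the intertwining operators have poles, and the Hecke algebra is a strict subalgebra of the $W_{[\sigma]}$-invariant regular endomorphisms, cut out by nontrivial compatibility conditions. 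The simplest counterexample is the Iwahori block of $\SL_2$: $\cH(G,I)$ is the affine Hecke algebra of type $A_1^{(1)}$, not $M_2(\cO(\C^\times))^{C_2}$.

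The second gap is the claim that $\Hom_{\reg}(R(G,K)^\fs,\C)\simeq\cO(\Psi(M)\sigma)^{W_{[\sigma]}}$. While the classes $[I_P^G(\sigma\otimes\chi)]$, $\chi\in\Psi(M)$, do span $R(G)^\fs$ with relations given only by the $W_{[\sigma]}$-action, the regular functionals are \emph{not} determined by a $W$-invariant regular function on $\Psi(M)\sigma$. To see this in the Iwahori block of $\SL_2$: at $\chi=-1$ the unitary principal series $I_B^G(-1)$ splits as $\sigma^+\oplus\sigma^-$, and the functional $f_0$ with $f_0([\sigma^\pm])=\pm1$ and $f_0=0$ on all other standard modules is regular (its restriction to each family $[I_P^G(\tau\otimes\chi')]$ is constant), yet it is not of the form $\chi\mapsto\tr$ against a $W$-invariant regular function — indeed $f_0([I_B^G(\chi)])\equiv 0$. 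So $\Hom_{\reg}(R(G)^\fs,\C)$ strictly contains $\cO(\Psi(M)\sigma)^{W_{[\sigma]}}$, and correspondingly the cocenter of the affine Hecke algebra is strictly larger than its center. This extra ``elliptic'' data, organized along a filtration by Levi subgroups, is exactly the content that the Bernstein--Deligne--Kazhdan proof has to handle; any proof of the trace Paley--Wiener theorem must account for it, and the dichotomy you noticed at the end of your proposal (surjection inducing an isomorphism on cocenters without being an isomorphism of algebras) is a symptom of precisely this elliptic contribution.
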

We prove an analogous statement for the asymptotic Hecke algebra $\mathcal J(G)$:
\begin{thm}\label{thm:trace-pw-J}
    Let $R(\mathcal J)$ be the Grothendieck group of finite-length $\mathcal J$-modules and let $\Hom_{\reg}(R(\mathcal J),\C)$ be the set of homomorphisms $\varphi\colon R(\mathcal J)\to \C$ such that:
    \begin{itemize}
        \item there exists a compact open subgroup $K$ of $G$ such that $f([\pi])=0$ for all representations $\pi\in\mathcal J\mathrm{-mod}$ with no $K$-fixed vectors; and
        \item for any parabolic subgroup $\mathbf P=\mathbf M\ltimes\mathbf U\subset \mathbf G$ and $\sigma\in\Rep^t(M)$, the function $f([I_P^G(\sigma\otimes\chi)])$ extends to a regular function of $\chi\in \Psi(M)$.
    \end{itemize}
\end{thm}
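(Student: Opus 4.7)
The plan is to establish that trace gives a natural isomorphism
\[
\tr\colon\cJ(G)/[\cJ(G),\cJ(G)]\xrightarrow{\sim}\Hom_{\reg}(R(\cJ),\C),
\]
following the strategy of the trace Paley-Wiener theorem for $\cH(G)$ (Proposition~\ref{prop:trace-pw-H}) but substituting the Hecke-algebraic ingredients with the Plancherel description of $\cJ(G)$ (Proposition~\ref{prop:plancherel-j}) and the fiberwise analysis of Proposition~\ref{prop:modulo-m}. Both sides decompose along Harish-Chandra blocks via~\eqref{eq:direct-sum-decomp-C} and are compatible with the colimit~\eqref{eq:colimit} over compact open subgroups, so I fix a block $\fd=[\bM,\delta]$ and a compact open $K\subset G$ and reduce to showing trace yields an isomorphism on the $(\fd,K)$-piece.

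For well-definedness, I view $x\in\cJ(G,K)^\fd$ as a rational $W_{[\delta]}$-equivariant section of $\End_\C(I_P^GV_{[\delta]}^K)$ on $\Psi(M)\delta$ via Proposition~\ref{prop:plancherel-j}. Then $\chi\mapsto\tr(x|I_P^G(\delta\otimes\chi))$ is $W_{[\delta]}$-invariant and rational, and its regularity at any $\chi$ follows by choosing $\bP'\in\cP(\bM)$ for which $\chi$ is non-strictly positive: condition~(2) of Proposition~\ref{prop:plancherel-j} makes $J_{P'|P}xJ_{P'|P}^{-1}$ regular at $\chi$, and trace is conjugation-invariant. Corollary~\ref{cor:grothendieck-group-iso} and Proposition~\ref{prop:langlands-classification} together show that the standard modules $[I_P^G(\sigma\otimes\chi)]$ form a basis of $R(\cJ)$, so the trace map indeed lands in $\Hom_{\reg}$.

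The heart of the proof is a fiberwise analysis over $\Spec Z$, where $Z\colonequals\cO(\Psi(M)\delta)^{W_{[\delta]}}=\Cent(\cJ(G,K)^\fd)$ by Corollary~\ref{cor:lattice}. At each maximal ideal $\mathfrak{m}=\mathfrak{m}_{W_{[\delta]}\chi}$, Proposition~\ref{prop:modulo-m} gives a surjection $\cJ(G,K)^\fd/\mathfrak{m}\twoheadrightarrow\bigoplus_i\End_\C(I_Q^G(\pi_i\otimes\chi')^K)$ that is an isomorphism on cocenters, so the source fiber cocenter is $\bigoplus_i\C$, one copy per irreducible constituent of $I_P^G(\delta\otimes\chi)$. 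On the target side, the Paley-Wiener condition constrains the sum $\sum_if([\pi_i])=f([I_P^G(\delta\otimes\chi)])$ to extend to a regular function of $\chi$ but leaves the individual values $f([\pi_i])$ otherwise free, so the fiber $\Hom_{\reg}/\mathfrak{m}\Hom_{\reg}$ is again $\bigoplus_i\C$, and trace identifies these. To globalize I combine finite generation of both sides over $Z$ with $\bigcap_\mathfrak{m}\mathfrak{m}=0$ (reducedness of $Z$); a Nakayama-type argument then converts the fiberwise bijection into a global isomorphism. The hard part will be this fiberwise matching at reducible points, where the cocenter acquires extra dimensions beyond $Z$---as illustrated in the $\mathrm{SL}_2$ example following Proposition~\ref{prop:modulo-m}---and these must be reconciled with the Paley-Wiener freedom of choosing individual $f([\pi_i])$ subject only to the regularity of their sum.
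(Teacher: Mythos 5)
Your plan for injectivity is essentially the paper's: reduce to a Harish-Chandra block $\fd$ and a compact open $K$, use finite generation of $\cJ(G,K)^\fd$ over $Z=\cO(\Psi(M)\delta)^{W_{[\delta]}}$ (Corollary~\ref{cor:lattice}) to reduce to each fiber $\cJ(G,K)^\fd/\mathfrak m$, and invoke the cocenter computation of Proposition~\ref{prop:modulo-m}. That part is sound.

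The surjectivity half has a genuine gap, and it is also where you diverge from the paper. You propose a direct fiberwise matching: claim that $\Hom_{\reg}/\mathfrak m\Hom_{\reg}\simeq\bigoplus_i\C$ and then globalize by a Nakayama-type argument. But $\Hom_{\reg}(R(\cJ),\C)$ has no evident finitely generated $Z$-module structure for which such a Nakayama argument could run, and the defining regularity condition on $\chi\mapsto f([I_P^G(\sigma\otimes\chi)])$ is a condition on an entire algebraic family, not something one can read off at a single fiber. In particular your claim that at a reducible point the constraint ``leaves the individual values $f([\pi_i])$ otherwise free'' subject only to the regularity of the sum is precisely the nontrivial assertion one would need to prove, and the $\mathrm{SL}_2$ example you cite shows that the fiber of $\cJ$ is \emph{not} simply a product of matrix blocks, so the cocenter dimension count at such points is delicate. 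You flag this as ``the hard part'' but do not resolve it. The paper avoids this entirely: having shown injectivity, it observes that Corollary~\ref{cor:grothendieck-group-iso} gives $R(\cJ)\simeq R(G)$ and hence $\Hom_{\reg}(R(\cJ),\C)\simeq\Hom_{\reg}(R(G),\C)$; combining this with the classical trace Paley--Wiener theorem for $\cH$ (Proposition~\ref{prop:trace-pw-H}) and the commutativity of the square forces the trace map $\cJ/[\cJ,\cJ]\to\Hom_{\reg}(R(\cJ),\C)$ to be surjective as well. You should replace your fiberwise surjectivity argument with this bootstrap; as written, your proof does not close.
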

\begin{proof}[Proof of Theorem~\ref{thm:trace-pw-J}]
    First of all, we can show that for $f\in\cJ$ if $f([I_P^G(\sigma\otimes\chi)])=0$ for any standard triple $(\bP,\sigma,\chi)$ then $f\in[\cJ,\cJ]$.
    It suffices to prove the proposition when $f\in\cJ(G,K)^\fd$ where $\fd=[\bM,\delta]$ is a Harish-Chandra block and $K\subset G$ is a compact open subgroup. It suffices to check that $f\mod\mathfrak m_\chi$ is in $[\cJ(G,K)^\fd/\mathfrak m_\chi,\cJ(G,K)^\fd/\mathfrak m_\chi]$, since $\cJ(G,K)^\fd$ is a finitely generated $\cO(\Psi(M)\delta)$-module. This is shown in Proposition~\ref{prop:modulo-m}.

    Thus $\cJ/[\cJ,\cJ]\to\Hom(R(\cJ),\C)$ is injective. The image is in $\Hom_{\reg}(R(\cJ),\C)$. But by Proposition~\ref{cor:grothendieck-group-iso} we have $R(\cJ)\simeq R(G)$, which gives an isomorphism
    \[
    \Hom_{\reg}(R(\cJ),\C)\simeq \Hom_{\reg}(R(G),\C).
    \]
    Thus $\cJ/[\cJ,\cJ]\to\Hom_{\reg}(R(\cJ),\C)$ is also surjective.
\end{proof}
As a corollary, we also obtain the proof of Bezrukavnikov, Braverman, and Kazhdan's conjecture Theorem~\ref{main-thm}:
\begin{proof}[Proof of Theorem~\ref{main-thm}]
    By Proposition~\ref{prop:trace-pw-H} and Theorem~\ref{thm:trace-pw-J} there is a commutative diagram of isomorphisms
    \[\begin{tikzcd}
        \mathcal H/[\mathcal H,\mathcal H] \arrow{r} \arrow{d}{\simeq} & \mathcal J/[\mathcal J,\mathcal J] \arrow{d}{\simeq} \\
\Hom_{\reg}(R(\mathcal H),\C) \arrow{r}{\simeq}& \Hom_{\reg}(R(\mathcal J),\C).
    \end{tikzcd}\]
    Thus the top morphism must also be an isomorphism.
\end{proof}

\bibliographystyle{amsalpha}
\bibliography{bibfile}
\end{document}